%
%
%
\documentclass{amsart}

\numberwithin{equation}{section}
\newcommand{\diam}{\text{diam\ }}
\newcommand{\supp}{\text{supp\ }}


\usepackage{indentfirst}
\usepackage{amsmath}
\usepackage{amscd}
\usepackage{amssymb, cancel, enumerate,color}
\usepackage{amsthm}
\usepackage[latin1]{inputenc}  
\newtheorem{theorem}{Theorem}[section]
\newtheorem{lemma}[theorem]{Lemma}

\theoremstyle{definition}

\newtheorem{proposition}[theorem]{Proposition}

\theoremstyle{remark}
\newtheorem{remark}[theorem]{Remark}

\numberwithin{equation}{section}

\begin{document}

\title[Riesz Type Potentials with upper doubling measures]
{Riesz Type Potentials in the framework of quasi-metric spaces equipped with upper doubling measures}

\author[B. Iaffei]{Bibiana Iaffei}
\address{Departamento de Matem\'atica (FHUC-UNL), IMAL-CONICET, Santa Fe, Argentina}

\email{biaffei@santafe-conicet.gov.ar}%

\author[L. Nitti]{Liliana Nitti}
\address{Departamento de Matem\'atica (FHUC-UNL), IMAL-CONICET, Santa Fe, Argentina}

\email{rnitti@fhuc.unl.edu.ar}

\subjclass{Primary 47B38,30L99,42B35}

\thanks{The authors were supported in part by CONICET, CAI+D(UNL) and ANCPyT}



\begin{abstract}
The purpose of this paper is threefold. First  the natural extension of Riesz potentials to the context of quasi metric measure spaces for the class of upper doubling measures are studied on Lebesgue spaces, obtaining necessary and sufficient conditions on a  upper doubling  measure. Second, we exhibit a geometric property of the measure of the ball which permit prove the boundedness in a unified way, both in of the doubling as  non doubling situation. Third,  we  show that  the result can be applied to a type Riesz potential operator defined over a space formed by two components which are not of necessarily equal dimensions.
\end{abstract}

\maketitle

\maketitle

\section{Introduction} 
\label{intro}

We study generalized potential operators with kernels $\frac{ d(x,y)^{\alpha}}{\lambda(x,d(x,y))}$ on  a bounded quasi-metric space $(X,d)$ with an upper doubling measure $\mu$. These Riesz type potential operators denoted $I_{\alpha}^{\lambda}$ are defined by the formula
\begin{equation}\label{defIalfamulambda0}
I_{\alpha}^{\lambda} f(x)
= \int_{X} \frac{ d(x,y)^{\alpha}}{\lambda(x,d(x,y))} f(y)\  d\mu(y),
\end{equation}
for  $\alpha>0$ and a function $\lambda:X \times \mathbb{R}^+\to \mathbb{R}^+$ that is doubling and as function of the second variable is non-decreasing. Under the assumption that the function $\lambda$, as function of the second variable, is of lower type greater than $\alpha$, definition \eqref{defIalfamulambda0} makes perfectly good sense when $f\in L^p(X,d,\mu)$.

We consider so-called upper doubling measures $\mu$, introduced in \cite{Hy10}, which constitute a simultaneous generalization of doubling measures and those with the upper power bound property $\mu(B(x,r)) \leq Cr^n$, which are the ones usually called in the literature as non-doubling measures; even though note that power bounded measures are only different, not more general than, the doubling measures.

The present work is devoted to investigate the behavior on Lebesgue spaces of the  Riesz potential type operator $I_{\alpha}^{\lambda}$ associated to an upper doubling measure $\mu$  with a dominating function $\lambda$ on a  bounded quasi-metric space.

Notice  that in the classical case when $X=\mathbb{R}^n$, $d$ is the usual Euclidean metric on $\mathbb{R}^n$,  $\mu$ the Lebesgue measure on $\mathbb{R}^n$ and $\lambda(x,r)=r^n$, the basic operator of potential type is the usual fractional integral  operator $I_{\alpha}$  given by the formula $$I_\alpha f(x)= \int_{\mathbb{R}^n} \frac{f(y)}{|x- y|^{n-\alpha}}\ dy.$$
For $\alpha = 2$, it is known as the Newtonian potential.
The fractional integral operator was introduced by Hardy and Littlewood \cite{HL-I,HL-II} and Sobolev \cite{Sob}. They proved that
$I_{\alpha}$ apply a function in $L^p$  boundedly to a function in $L^q$, provided that $\frac{1}{q}=\frac{1}{p}-\frac{\alpha}{n}$. Also this result was proved using interpolation in \cite{St} and a pointwise estimation involving maximal function in \cite{Hed}.

Norm estimates for $I_{\alpha} f$ on Lebesgue spaces, as well as for operators with more general kernels than $|x-y|^{\alpha-n}$ and defined on more general spaces, have been extensively studied.

Early investigations in the direction of obtaining a high generality was inspired by the  fact that  the doubling property of the Lebesgue measure play an important role. Indeed, Gatto and Vági in \cite{GaVa1} proved the Hardy-Littlewood-Sobolev theorem in the context of space of homogeneous type if is imposed to the space the condition of normality. The theorem is an easy consequence of an inequality of Hedberg (see \cite{Hed}) which was shown  that hold in normal homogeneous type spaces. Nakai in \cite{Na2} obtained the result when the space is called $Q$-homogeneous or Ahlfors $Q$-regular metric measure space.

The boundedness of $I_{\alpha}$ on Lebesgue spaces in the non-homogeneous setting has been studied by García Cuerva and Gatto in \cite{GaCuGa}, García Cuerva and Martell in \cite{GaCuMa01} and Kokilashvili and Meshki in \cite{KoMe1}, \cite{EKM}. In the first work was used interpolation and  in the others Hedberg's inequality. If the $L^p(\mu)-L^q(\mu)$  boundedness of $I_{\alpha} f$ depends on the boundedness of the Hardy-Littlewood maximal operator on $L^p(\mu)$  and $\mu$ is a measure non-doubling, it must be modified its definition somehow, because as shown in \cite{NTV98} the usual Hardy-Littlewood maximal operator is not bounded in $L^p(\mu)$.

Fractals are measurable sets with non-integer Hausdorff dimensions.  The interest in problems of the operator theory in fractal sets has been growing continuously during the last few years because of numerous applications in another sciences.
In the fractal context, Riesz potentials are considered in \cite{Za04}, \cite{Za05}  where are introduced as traces of the corresponding Euclidean variants. A related approach for   $(X,d,\mu)$ by means of local Euclidean charts can be found in \cite{TrYa01}. In particular, the Riesz potentials of order $\alpha$ on so-called $s$-sets  are given by
$$I_\alpha^s f(x)= \int_{\mathbb{R}^n} \frac{f(y)}{d(x,y)^{s-\alpha}} d\mu(y),$$
where the rol of dimension is played by $s$.

Our work will extend  the Riesz potential in the more general situation where there may be many (non-overlapping) fractals embedded in $\mathbb R^n$, with different or not Hausdorff dimensions and which may touch each other or not. In this context where the variable dimension is permitted, we must point out the work of  Hambly and Kumagai \cite{HaKu}, who studied diffusion processes on fractals components embedded in  $\mathbb{R}^2$. One can find physical examples relative to this subject in \cite{AFPf}, \cite{FVA} and \cite{Ta}.

The goal of the present paper is threefold. First we extend the Riesz potentials  in one different direction that including both the doubling and non-doubling situation in a unified way, all the more we give necessary and sufficient conditions on a  measure for which the estimate for $I_{\alpha}^{\lambda}$  on Lebesgue spaces holds, generalizing the result in \cite{GaCuGa} and \cite{KoMe1}. Second, we provide the proof of the boundedness adapting an  idea from \cite{KoMe1}  that  allow us to unify the proof both for the homogeneous and non-homogeneous case. The basic strategy is consider an adequate  maximal function which involves the measure of the balls and the dominating function $\lambda$. Third, as an application of our result and after recognizing  that the measure $\mu^{\gamma_1,\gamma_2}$ defined in \cite{AN} satisfies $\mu^{\gamma_1,\gamma_2}(B(x,r))\leq C r^{n(x)}$ for all $x\in X$ and $r>0$ , we derive norms estimates for appropriate generalized fractional integrals in the context of a metric space,  formed by two components  with a contact of order zero, and such that each component supports a Ahlfors $n_i$-regular measure, $i=1,2$ ($n_1$ not necessarily equal to $n_2$). More precisely, we consider the operator
\begin{equation}\label{defIalfamug12}
I_{\alpha}^{n(\cdot)} f(x)= \int_{X} \frac{ d(x,y)^{\alpha}}{d(x,y)^{n(x)}} \, f(y) \, d\mu^{\gamma_{1},\gamma_{2}}(y),
\end{equation}
where $n(x)$ can take the values $n_1$ or $n_2$ and $\mu^{\gamma_{1},\gamma_{2}}$ is an upper doubling measure with $\lambda(x,r)=r^{n(x)}$. We find necessary and sufficient conditions for the inequality $\|I_{\alpha}^{n(\cdot)} f\|_{q(\cdot)}\leq C \|f\|_p$ be true. Observe that the operator $I_{\alpha}^{n(\cdot)}$ applies $L^p$ in a
variable Lebesgue space $L^q(\cdot)$.

In \cite{HHL} was considered the variable dimension and  was proved the boundedness on variable Lebesgue spaces of the following operator
\begin{equation}\label{defIalfavar}
I_{\alpha} f(x)= \int_{X} \frac{ d(x,y)^{\alpha}}{\mu(B(x,d(x,y)))} \, f(y) \, d\mu (y),
\end{equation}
by assuming that the measure is lower  Ahlfors $Q(\cdot)$-regular in a bounded subset $X$ of $\mathbb{R}^n$. It is immediately clear that the doubling condition for a measure is stronger that lower Ahlfors regularity, but weaker than Ahlfors regularity.

We note that \eqref{defIalfamug12}, unlike of \eqref{defIalfavar},  carries information about the behavior of the dimension.

Let us describe our setting  in more details and  submit the definitions of the basic concepts in Section \ref{setting}.
Section \ref{operator} is devoted to introduce the appropriate  Riesz potential operator $I_{\alpha}^{\lambda}$ in an upper doubling environment.
In Section \ref{boundedness} we give  necessary and sufficient conditions on  the measure for which the boundedness for $I_{\alpha}^{\lambda}$  on Lebesgue spaces holds. In Section \ref{doscomponentes} we study regularity properties of one measure defined in \cite{AN} and show that this measure is  another non-trivial example  of upper doubling measure. In section \ref{Rieszdoscomponentes} we state necessary and sufficient conditions on an  upper doubling  measure for which the boundedness for $I_{\alpha}^{n(\cdot)}$  on Lebesgue spaces holds.

\section{The general setting and basic facts}\label{setting}
\subsection{ Quasi-metric measure spaces}
Let $(X,d)$ a quasi-metric space. By a quasi-metric on a set $X$ we mean a nonnegative function $d$ defined on $X\times X$ such that
\begin{equation*}
d(x,y) \geq 0 \ \text{for\ every\ $x$\ and\ $y$\ in\ $X$} \: \text{and \ } \: d(x,y)=0\ \text{if\ and\ only\ if}\ x=y,
\end{equation*}
\begin{equation*}
d(x,y)=d(y,x), \text {for\ every\ $x$\ and\ $y$\ in\ $X$}
\end{equation*}
\begin{equation*}
d(x,y)\leq K_1 (d(x,z)+ d(z,y)),
\end{equation*}
for every $x$ and $y$ in $X$ and for some finite constant $K_1>0$.

A quasi-metric space $(X, d)$ is \textit{geometrically doubling} or has the \textit{weak homogeneity property} if there exists a natural number $N$ such that every open $d$-ball $B(x, r) = \{y \in X : d(y, x) < r\}$ can be covered by at most $N$ balls of radius $r/2$. A basic observation is that in a geometrically doubling quasi-metric space, a ball $B(x, r)$ can contain the centers
$x_i$ of at most $N\alpha^{-n}$ disjoint balls $B(x_i, \alpha r)$ for $\alpha\in (0, 1]$.
This weak homogeneity was first observed by Coifman and Weiss in \cite{CW}. Hyt\"onen in \cite{Hy10} gives another equivalent conditions of that definition. As was shown by Mac\'ias and Segovia in \cite{MS79}, every quasi-metric space is metrizable in
the sense that there exist a distance $\rho$ and a positive number $\alpha$ such that $\rho^{\alpha}$ is
equivalent to $d$.
The geometrically doubling quasi-metric spaces also satisfy the following topological properties: are separable and have the Heine-Borel property (see \cite{A1}).

A Borel measure $\mu$  defined on the $d$-balls  is said to be non-trivial if  $\mu(B(x,r))$ is positive and finite for every $x\in X$ and every $r > 0$.  A non-trivial measure $\mu$ is said to be is \textit{doubling}, if there exists a positive constant $K_2$ such that the
inequalities
\begin{equation}\label{duplicacion}
 \mu(B(x,2r))\leq K_2\mu(B(x,r)),
\end{equation} hold for every
$x\in X$ and every $r>0$. We say that $(X,d,\mu)$ is a \textit{space of homogeneous type} if $\mu$ is doubling on $(X,d)$. There is an
extensive literature on analysis on these structures, and several examples and
applications are given in \cite{CW}.

It is well known that if $(X,d)$ supports a doubling measure  then $(X,d)$ is geometrically doubling. Indeed, it was one of the first things pointed out by Coifman and Weiss in \cite{CW} (p. 67). Luukkainen and Saksman \cite{LuSa} proved that if $(X,d)$ is a complete, geometrically doubling metric space, then there exists a Borel measure $\mu$ on $X$ such that $(X,d,\mu)$ is a space of homogeneous type. Also a compact metric space carries a non-trivial doubling measure if and only if it is geometrically doubling metric space \cite{VK2}, \cite{Wu}.

We say that a point $x$ in a space of homogeneous type $(X,d,\mu)$ is an atom if $\mu({x}) > 0$. When $\mu({x}) = 0$ for every $x\in X$ we say that $(X,d,\mu)$ is a non-atomic space. Mac\'ias and Segovia in \cite{MS79}  proved, in the context of space of homogeneous type, that a point is an atom if and only if it is topologically isolated, and that the set of such points is at most countable.

In this article we  assume that $\text{diam}\ (X) < \infty$, then there exists a nonnegative constant $R_0=\text{diam\ } X$ such that
\begin{equation}\label{acotacionespacio}
X=B(x,R_0)
\end{equation}
for all $x\in X$.

As is known, from \eqref{duplicacion}  follows the
property
\begin{equation}\label{cocmedidas}
\frac{\mu(B(x, \rho))}{\mu(B(y, r))}\geq C_{\mu} \biggl(\frac{\rho}{r}\biggr)^{N}\quad  N = \log _2 K_2;
\end{equation}
for all the balls $B(x, \rho)$ and $B(y, r)$ with $0 < r \leq \rho<\infty $ and $ y\in B(x,\rho),$ where $C_{\mu} > 0$
does not depend on $r$, $\rho$  and $x$. From \eqref{cocmedidas} we have
\begin{equation}\label{lower}
\mu(B(x, r)) \geq c_0 r^N; \quad x\in X;\ 0 < r \leq \text{diam}(X);
\end{equation}
 Condition
\eqref{lower} is also known as the \textit{lower Ahlfors regularity condition}. The \textit{upper Ahlfors regularity condition} (also called the \textit{non-doubling condition}) holds on $X$ if there exists $n > 0$ such that
\begin{equation}\label{nondoub}
B(x, r)\leq c_1 r^n;
\end{equation}
where $c_1 > 0$ does not depend on $x\in X$  and $0 < r \leq \text{diam}(X)$, and $n$ need not to
be an integer.

Given a  Borel measure $\mu $ on $X$, we say that $(X,d, \mu)$ is  an \textit{Ahlfors $Q$-regular
metric measure space} or $Q$-normal space, for $Q>0$, if there exists a constant $A_1\geq 1$,
such that,
\begin{equation}\label{normalidad}
A_1^{-1}r^Q
\leq \mu(B(x, r)) \leq A_1 r^Q,
\end{equation}
for $0 < r \leq \text{diam}(X)$  and $x\in X$.
It is easy to show that if $(X,d,\mu)$ is an Ahlfors $Q$-regular quasi-metric measure space, then the
Hausdorff dimension, with respect to $d$, is exactly $Q$. Moreover for $Q>0$ no upper Ahlfors $Q$-regular
quasi-metric measure space has atoms in the sense that no single point has positive $\mu$ measure. In particular, if $\mu$ is positive on the balls and  satisfies the upper Ahlfors $Q$-regular condition, the space  have not  isolated points. Otherwise no lower Ahlfors $Q$-regular quasi-metric measure space has isolated points.

If in the above definitions is  modified  the variation interval of $r$ in the following way: $\frac{\mu(\{x\})}{A_1} < r \leq \text{diam}(X)$, is contemplated the case of bounded spaces with atoms. When $Q=1$  the space $(X,d,\mu)$ is named normal space (see \cite{MS79}), if $Q\neq 1$ the space is usually called $Q$-normal.

It is of interest to study also spaces with a variable dimension. Thus, if $Q: X \to (0,\infty)$ is a bounded function,
then we say that $\mu$ is \textit{Ahlfors $Q(\cdot)$-regular} if $\mu(B(x, r))\approx r^{Q(x)}$,  for all $x\in X$ and $0<r\leq \diam X$. Ahlfors $Q(\cdot)$-regularity is only possible for sufficiently regular functions $Q$ (see \cite{HHL}).  It can be defined similarly that the measure $\mu$  is \textit{lower Ahlfors $N(\cdot)$-regular} if $\mu(B(x; r)) \geq c_0' r^{N(x)}$ or \textit{upper Ahlfors $n(\cdot)$-regular} if $\mu(B(x; r)) \leq c_1' r^{n(x)}$  for all $x\in X$ and $r\in (0, 1)$.

Hyt\"onen in \cite{Hy10} defines a class of measures that encompasses both the doubling measures and those satisfying the upper power bound  $\mu(B(x; r)) \leq c_1 r^{n}$ or  $\mu(B(x; r)) \leq c_1' r^{n(x)}$.  Namely measures which are controlled from above by functions doubling.
More precisely, a Borel measure $\mu$ in some quasi-metric space $(X, d)$ is called \textit{upper doubling} if there exists a dominating function
$\lambda: X \times \mathbb{R}^{+} \to \mathbb{R}^{+} $ so that $r \to \lambda(x, r)$ is non-decreasing, $\lambda(x, 2r) \leq C_{\lambda}\lambda(x, r)$ and
\begin{equation}\label{updoub}
\mu(B(x, r)) \leq \lambda(x, r) \quad \text{for all\ } x \in X \text{\ and\ } r > 0.
\end{equation}
A quasi-metric measure space $(X,d,\mu)$ is said to be upper doubling if $\mu$ is a measure upper doubling.
The number $d := \log_2 C_{\lambda}$ can be thought of as (an upper bound for) a dimension of the measure $\mu$, and it  plays a similar role as the quantity denoted by $N$ in \eqref{cocmedidas}.
It was proved in \cite{HyYY} that there exists another dominating function $\tilde \lambda$ such that $\tilde \lambda\leq \lambda$, $C_{\tilde\lambda}\leq C_{\lambda}$ and,  for all $x, y \in X$ with $d(x, y)< r$,
\begin{equation}\label{laotralambda}
\tilde\lambda(x, r) = C_{\tilde\lambda} \tilde\lambda(y, r).
\end{equation}
Thus in what follows, we always assume that $\lambda$ satisfies \eqref{laotralambda}.

It is immediate that a measure  doubling is a special case of upper doubling, where one can  take the dominating function to be $\lambda(x,r)=\mu(B(x,r))$. On the other hand, a non-doubling measure is upper doubling with $\lambda(x,r)= Cr^n$.  Hyt\"onen and Martikainen in \cite{HM12} note that the measures obtained by Volberg and Wick in \cite{VW12}  are actually upper doubling. In the section \ref{doscomponentes} we show that the measure defined by one the authors and Aimar in \cite{AN} is another non-trivial example of upper doubling measure and moreover is a upper Ahlfors $n(\cdot)$-regular measure.

In the following lemma we state a relation between the upper doubling measures and the atoms.
\begin{lemma}\label{upperandatom}
If $\mu$ is a upper doubling measure on $X$ with a dominating function $\lambda$ which satisfies that $\lambda(x,r_j)\to 0$ for each $x$ when $r_j\to 0$ for $j\to \infty$, then  $\mu$ not have atoms. If additionally the measure is  positive on the balls, is obtained that the space have not   isolated points.
\end{lemma}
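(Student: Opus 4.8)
The plan is to combine the defining bound $\mu(B(x,r))\le\lambda(x,r)$ with the elementary inclusion $\{x\}\subseteq B(x,r)$ and the monotonicity of $\mu$, and then let the radius shrink to zero. For the first assertion I would fix $x\in X$ and observe that, since $d(x,x)=0<r$ for every $r>0$, we have $x\in B(x,r)$ and hence $\{x\}\subseteq B(x,r)$. Monotonicity of the measure together with the upper doubling condition \eqref{updoub} then gives $\mu(\{x\})\le\mu(B(x,r))\le\lambda(x,r)$. Taking $r=r_j$ and passing to the limit $j\to\infty$, the hypothesis $\lambda(x,r_j)\to 0$ forces $\mu(\{x\})=0$. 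As $x$ is arbitrary, $\mu$ has no atoms.

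For the second assertion I would argue by contradiction. Suppose some $x\in X$ is topologically isolated, so that $\{x\}$ is an open set. Because the $d$-balls centered at $x$ form a neighborhood basis at $x$, there is a radius $r_0>0$ with $B(x,r_0)\subseteq\{x\}$; combined with $x\in B(x,r_0)$ this yields $B(x,r_0)=\{x\}$. The first part of the lemma then gives $\mu(B(x,r_0))=\mu(\{x\})=0$, contradicting the hypothesis that $\mu$ is positive on the balls. Hence $X$ has no isolated points.

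The estimates involved are immediate; the only step needing a word of justification is the topological one in the second part, namely that an isolated point is captured by a $d$-ball collapsing to the singleton. This is legitimate in the present setting because $(X,d)$, being geometrically doubling, is metrizable in the sense of Mac\'ias and Segovia \cite{MS79}, so the $d$-balls indeed form a neighborhood basis. I expect this to be the only place where structural properties of the space enter; everything else relies on nothing beyond the definition of an upper doubling measure and the assumed decay $\lambda(x,r_j)\to 0$.
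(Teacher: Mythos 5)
Your proof is correct and takes essentially the same approach as the paper: the inequality chain $\mu(\{x\})\le\mu(B(x,r_j))\le\lambda(x,r_j)\to 0$ for the absence of atoms (the paper merely phrases this as a contradiction), and the observation that an isolated point coincides with some ball $B(x,r_0)=\{x\}$, whose measure would then vanish, for the second claim. One small remark: your appeal to geometric doubling is both unavailable (the lemma does not assume it) and unnecessary, since the Mac\'ias--Segovia metrization cited in the paper holds for every quasi-metric space, and in the quasi-metric topology an isolated point is captured by a ball essentially by definition.
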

\begin{proof}
Suppose on the contrary that there exists $x\in X$  such that $\mu(\{x\})=\alpha>0$, but $\alpha<\mu(\{x\})\leq \mu(B(x,r_j)\leq \lambda(x,r_j)$ and then using the hypotheses we obtain $\alpha<0$, contradicting the assumption about $\alpha$. The proof of second part is immediately from the fact that  $x$ is isolated point there is a number positive $R$ such that $\{x\}=B(x,R)$.

\end{proof}
Note that the above property on $\lambda$ is satisfied in the case $\lambda(x,r)=Cr^n$ and when $\lambda(x,r)=\mu(B(x,r))$ is translated as $\mu(B(x,r_j))\to 0$ for each $x$ when $r_j\to 0$ for $j\to \infty$. The measure $\mu^{\gamma_1,\gamma_2}$ introduced in section \ref{doscomponentes} is another upper doubling measure with dominating function that satisfies the previous condition.

We refer to \cite{A1}, \cite{EKM}, \cite{GGKK}, \cite{HaKo}, \cite{He} for general properties of quasi-metric measure
spaces.

\subsection{The modified maximal operator}
Let $(X,d)$ be a   geometrically doubling  quasi-metric space and $\mu$ be a Borel measure on $X$ which is finite on bounded sets.
Recall that the Hardy-Littlewood maximal function $Mf(x)$ is defined (for Borel measurable functions $f$) by
$$Mf(x) := \sup_{r>0} \frac{1}{\mu(B(x,r))} \int_{B(x,r)} |f|\ d\mu.$$
The definition makes sense $\mu$-almost everywhere since
 if $x\in \supp \mu$, then $\mu (B(x, r))$ is positive for every $r > 0$ (otherwise a small open
ball centered at $x$ could be removed from the support of $\mu$).
If the measure $\mu$ satisfies the doubling property,  the Hardy-Littlewood maximal operator is well-known
to be bounded on all $L^p(\mu)$ with $1 < p \leq +\infty$ and from $L^1(\mu)$ to $L^{1,\infty}(\mu)$. But, omitting the doubling requirement, for arbitrary  geometrically doubling quasi-metric space $X$ and measure $\mu$, only we can said
 that $M$ is bounded on $L^{\infty}(\mu)$. One way to avoid this problem is to replace the measure of the ball $B(x, r)$ in the denominator by the measure of the three times larger ball, i.e., to define
\begin{equation}\label{maximalmodificada}
\tilde{M}f(x):= \sup_{r>0}\frac{1}{\mu(B(x,3K_1 r))} \int_{B(x,r)} |f|\ d\mu,
\end{equation}
where the constant $K_1$ is from of definition of a quasi-metric.
Note that always $\tilde{M}f(x)\leq Mf(x)$ and, if the measure $\mu$ satisfies the doubling condition,
$\tilde{M}f(x)\leq C Mf(x)$ for some constant $C > 0$.
\begin{lemma}\label{acotacionmaximal}
 If $(X,d)$ is geometrically doubling, and $\mu$ is a Borel measure on $X$ which is finite on bounded sets, the modified maximal  operator $\tilde{M}$ is bounded on $L^p(\mu)$ for each $p\in (1;\infty]$ and acts from $L^1(\mu)$ to $L^{1,\infty}(\mu)$.
\end{lemma}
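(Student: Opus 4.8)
The plan is to prove the two endpoint estimates and then interpolate. The endpoint $p=\infty$ is immediate: since $3K_1\ge 1$ we have $B(x,r)\subseteq B(x,3K_1 r)$, hence $\mu(B(x,r))\le \mu(B(x,3K_1r))$, and therefore for $x\in\supp\mu$
$$\tilde{M} f(x)=\sup_{r>0}\frac{1}{\mu(B(x,3K_1r))}\int_{B(x,r)}|f|\,d\mu \le \|f\|_{L^\infty(\mu)}\sup_{r>0}\frac{\mu(B(x,r))}{\mu(B(x,3K_1r))}\le \|f\|_{L^\infty(\mu)}.$$
Thus $\tilde{M}$ maps $L^\infty(\mu)$ to $L^\infty(\mu)$. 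Since $\tilde{M}$ is sublinear, once the weak type $(1,1)$ inequality is established the boundedness on $L^p(\mu)$ for $1<p<\infty$ follows from the Marcinkiewicz interpolation theorem, so the whole statement reduces to the weak type bound.

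The core of the argument is the weak type $(1,1)$ estimate, and this is exactly the point at which replacing $\mu(B(x,r))$ by $\mu(B(x,3K_1r))$ pays off and lets us dispense with any doubling hypothesis. Fix $\lambda>0$ and set $E_\lambda=\{x\in X:\tilde{M} f(x)>\lambda\}$. For each $x\in E_\lambda$ I would choose a radius $r_x>0$ realizing the supremum up to a factor, so that
$$\mu(B(x,3K_1 r_x))<\frac{1}{\lambda}\int_{B(x,r_x)}|f|\,d\mu\le\frac{1}{\lambda}\|f\|_{L^1(\mu)}.$$
Because $\diam X=R_0<\infty$ (a ball of radius $\ge R_0$ is all of $X$), the radii $r_x$ may be taken uniformly bounded, so the family $\{B(x,r_x)\}_{x\in E_\lambda}$ has bounded radii and covers $E_\lambda$.

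Next I would invoke a Vitali type covering lemma valid in geometrically doubling quasi-metric spaces: from $\{B(x,r_x)\}$ one extracts a countable pairwise disjoint subfamily $\{B(x_i,r_{x_i})\}_i$ such that the whole original family is covered by the enlargements $\{B(x_i,3K_1 r_{x_i})\}_i$. One selects balls greedily in decreasing order of radius across the scales; the geometric doubling property of $(X,d)$ guarantees that the selected disjoint family is countable, while the quasi-triangle inequality produces the dilation factor, and the enlargement $3K_1$ in the definition of $\tilde{M}$ is precisely what accommodates it. Granting this, $E_\lambda\subseteq\bigcup_i B(x_i,3K_1 r_{x_i})$, and combining the defining inequality of each selected ball with the disjointness of the $B(x_i,r_{x_i})$ gives
$$\mu(E_\lambda)\le\sum_i\mu(B(x_i,3K_1 r_{x_i}))< \frac{1}{\lambda}\sum_i\int_{B(x_i,r_{x_i})}|f|\,d\mu\le\frac{1}{\lambda}\int_X|f|\,d\mu=\frac{1}{\lambda}\|f\|_{L^1(\mu)},$$
which is the desired weak type $(1,1)$ inequality.

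The main obstacle is the covering step: one must produce the disjoint subfamily together with the covering-by-enlargements property while keeping the dilation factor no larger than the $3K_1$ appearing in the denominator of $\tilde{M}$, since it is exactly this matching that makes the final chain of inequalities close. This is also the only place where the geometric doubling of $(X,d)$ is genuinely used, namely to secure countability of the disjoint family and to run the greedy selection; the doubling property of the measure $\mu$ is never needed, because the enlarged ball in the denominator already absorbs the dilation coming from the quasi-triangle inequality.
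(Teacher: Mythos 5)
Your overall architecture is sound and standard: the $L^\infty$ bound is trivial, Marcinkiewicz interpolation handles $1<p<\infty$, and everything reduces to the weak type $(1,1)$ inequality. (For the record, the paper does not prove this lemma at all; it delegates the weak $(1,1)$ estimate to \cite{NTV98} and \cite{EKM}.) The genuine gap is your covering step, and it is not a technicality. The lemma you invoke --- a pairwise \emph{disjoint} subfamily whose $3K_1$-dilates cover --- is not what the Vitali greedy selection delivers. If $B(x,r)$ meets a selected ball $B(x_i,r_i)$ with $r\le 2r_i$ and $z$ is a common point, then for $y\in B(x,r)$ the quasi-triangle inequality only gives $d(y,x_i)\le K_1 d(y,x)+K_1^2 d(x,z)+K_1^2 d(z,x_i)<(2K_1+3K_1^2)\,r_i$; even selecting nearly maximal radii ($r\le(1+\epsilon)r_i$) the dilation is still at least $(K_1+2K_1^2)r_i$. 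This exceeds $3K_1 r_i$ for every $K_1\ge 1$: for $K_1>1$ by a wide margin, and even in the metric case $K_1=1$ you only reach $3+\epsilon$, never $3$. The mismatch is fatal precisely because $\mu$ is not doubling: your selected balls satisfy $\mu(B(x_i,3K_1r_i))<\lambda^{-1}\int_{B(x_i,r_i)}|f|\,d\mu$, but the covering produces the balls $B(x_i,(2K_1+3K_1^2)r_i)$, and comparing the measures of these two balls is exactly the doubling hypothesis the lemma is designed to avoid. Nor can the covering claim itself be rescued when $K_1>1$: already for two intersecting balls of equal radius $r$, a point of one can lie at distance arbitrarily close to $(K_1+2K_1^2)r$ from the center of the other (finite quasi-metric spaces realizing this are trivially geometrically doubling), so no disjoint-subfamily lemma with dilation factor $3K_1<K_1+2K_1^2$ holds in general.

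The repair is to give up disjointness and settle for bounded overlap, selecting scale by scale. Assume $r_x\le R_0=\diam X$ and grade the radii dyadically, $r_x\in(2^{-k-1}R_0,2^{-k}R_0]$; at stage $k$ choose a maximal $2^{-k}R_0$-separated set among those centers of stage $k$ not already covered by the $3K_1$-dilates of previously chosen balls. Then every $x\in E_\lambda$ lies in $B(x_i,2r_{x_i})\subseteq B(x_i,3K_1r_{x_i})$ for some chosen $i$; chosen balls from different stages are pairwise disjoint (if $B(x_i,r_{x_i})\cap B(x_j,r_{x_j})\neq\emptyset$ with $r_{x_j}\le r_{x_i}$, then $d(x_i,x_j)<2K_1r_{x_i}<3K_1r_{x_i}$, contradicting the selection rule); and within one stage a point can lie in at most $C(N,K_1)$ chosen balls, by iterating geometric doubling until the covering radius drops below $2^{-k}R_0/(2K_1)$. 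Summing, $\mu(E_\lambda)\le\sum_i\mu(B(x_i,3K_1r_{x_i}))<\lambda^{-1}\sum_i\int_{B(x_i,r_{x_i})}|f|\,d\mu\le C(N,K_1)\,\lambda^{-1}\|f\|_{L^1(\mu)}$. Note two things this corrects in your account: geometric doubling is genuinely needed here to control the overlap, not (as you say) to make the family countable --- the classical $5r$-Vitali lemma needs no geometric doubling at all; and the constant $1$ in your final weak bound should itself have been a warning, since the true constant must depend on $N$ and $K_1$. (The route of \cite{NTV98}, which the paper cites, instead uses the Besicovitch covering theorem; that is available in $\mathbb{R}^n$ but fails in general geometrically doubling quasi-metric spaces, so it cannot be invoked in this setting.)
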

The weak type 1-1 estimate have been proved by Nazarov, Treil and Volverg in \cite{NTV98}.
For other approach see \cite{EKM}, p. 368, and the references therein.

\subsection{Variable exponent Lebesgue spaces}
We refer  in the next subsection the basic definitions  and properties of variable exponent Lebesgue spaces which appear in Sections \ref{boundedness} and \ref{Rieszdoscomponentes}.

Let $p : X \to [1;\infty)$ be a $\mu$-measurable function. Everywhere below we assume
that
\begin{equation}\label{pmenospmas}
1 < p_{-}\leq p(x)\leq p_{+} < \infty; \  x \in X;
\end{equation}
according to the notation in (2.1). By $L^{p(\cdot)}(X)$ we denote the space of all $\mu$
measurable functions $f$ on $X$ such that the modular
\begin{equation}\label{modular}
I_{p(\cdot)}(f) = I_{p(\cdot);X} (f) :=\int_{X} |f(x)|^{p(x)} d\mu(x)
\end{equation}
is finite. This is a Banach space with respect to the norm
\begin{equation}\label{normapvariable}
\|f\|_{p(\cdot)} = \|f\|_{p(\cdot);X} := \inf\{\lambda > 0 : I_{p(\cdot)}\biggl(\frac{f}{\lambda}\biggr)\leq 1\}
\end{equation}

It can be seen  in \cite{KR} that $I_{p(\cdot)}$ has the following properties:
\begin{enumerate}[(i)]
\item $I_{p(\cdot)}(f)\geq 0$ for every function $f$.
\item $I_{p(\cdot)}(f)=0$ if and only if $f=0$.
\item $I_{p(\cdot)}(-f)= I_{p(\cdot)}(f)$ for every $f$.
\item $I_{p(\cdot)}$ is convex.
\item If $ |f(x)| \geq |g(x)|$ for a.e. $x\in X$ and if  $I_{p(\cdot)}(f)<\infty$, then
 $I_{p(\cdot)}(f)\geq I_{p(\cdot)}(g)$; the last inequality is strict if $ |f|\neq |g|$.
\item \begin{equation}\label{norm1}
\text{If\ } \|f\|_{p(\cdot)}\leq 1, \text{\ then\ }  I_{p(\cdot)}(f)\leq \|f\|_{p(\cdot)}.
\end{equation}
\end{enumerate}
The properties (i)--(iv) characterize $I_{p(\cdot)}$ as the convex modular in the sense of \cite{Mu}.

In the setting of quasi-metric measure spaces $(X,d,\mu)$ can be proved one version of the theorem 2.8 in \cite{KR} which states that $L^{q(x)}$ is continuously embedded in $L^{p(x)}$ if and only if $p(x)\leq q(x)$ for a.e. $x\in X$, when $0<\mu(X)<\infty$ and $p,q$ are measurable functions such that $p,q:X\to [1,\infty)$. This result will be used in theorem \ref{necIlambda}.

Variable exponent Lebesgue spaces on general quasi-metric measure spaces have
been considered in \cite{FMS}, \cite{HHL}, \cite{HHP}, \cite{AS}  and \cite{GPS}.

\subsection{Lower and upper type  functions}
Recall some  definitions  concerning to increasing functions which  appear in the bibliography when is attempted  to  generalize  power functions.

We say that one such function $\lambda$ is of \textit{lower type} $a\geq 0$ if
\begin{equation}
\lambda(st)\leq c_1 s^a \lambda(t)
\end{equation}
for some constant $c_1$, every $0<s\leq 1$ and every $t>0$. Similarly
$\lambda$ is of \textit{upper type} $b\geq 0$ if
\begin{equation}
\lambda(st)\leq c_2 s^b \lambda (t)
\end{equation}
for some constant $c_2$, every $s\geq 1$ and every $t>0$.
It is immediate that if $\lambda$ is of lower type $a_1$ and $a_2<a_1$ then $\lambda$ is also of lower type $a_2$. We say that a function is of \textit{lower type greater than} $\alpha$ if it is of lower type $\alpha_0$, for some $\alpha_0>\alpha$. Similarly for \textit{upper type less than }$\alpha$.
The  types of a function determine an infinite ray, this allows us introduce the concept of upper and lower index as the infimum and supremum, respectively, of such sets.


Now we  state one property that will be useful in the following sections.
\begin{proposition}\label{proptipoinferior}
Let the function $\lambda$  and the positive real number $\alpha$. The function $\lambda$ is of lower type  $\alpha$ if and only if the inequality
\begin{equation}\label{equitipo}
\frac{r_2^{\alpha}}{\lambda(r_2)}\leq c_1 \frac{r_1^{\alpha}}{\lambda(r_1)}
\end{equation}
holds for all $0< r_1\leq r_2$ and some positive constant $c_1$.
\end{proposition}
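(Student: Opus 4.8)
The plan is to prove the two implications directly, observing that both statements are simply equivalent reformulations of the monotonicity of the quotient $r\mapsto r^{\alpha}/\lambda(r)$, linked by the elementary substitution that relates the pair $(s,t)$ in the lower-type definition to the pair $(r_1,r_2)$ in \eqref{equitipo}. Throughout I would use that $\lambda$ takes values in $\mathbb{R}^+$, so that dividing by $\lambda(r_1)$ and $\lambda(r_2)$ is legitimate, and I would keep track of the fact that the \emph{same} constant $c_1$ serves in both directions.

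For the forward implication, assume $\lambda$ is of lower type $\alpha$, fix $0<r_1\le r_2$, and set $s=r_1/r_2$ and $t=r_2$. Then $0<s\le 1$ and $st=r_1$, so the defining inequality $\lambda(st)\le c_1 s^{\alpha}\lambda(t)$ becomes $\lambda(r_1)\le c_1 (r_1/r_2)^{\alpha}\lambda(r_2)$. Dividing through by the positive quantity $\lambda(r_1)\lambda(r_2)$ and then multiplying by $r_2^{\alpha}$ yields exactly \eqref{equitipo}.

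For the converse, assume \eqref{equitipo} holds for all $0<r_1\le r_2$, and fix $0<s\le 1$ and $t>0$. Putting $r_1=st$ and $r_2=t$, we have $r_1\le r_2$ precisely because $s\le 1$, so \eqref{equitipo} gives $t^{\alpha}/\lambda(t)\le c_1\,(st)^{\alpha}/\lambda(st)$. Clearing denominators and isolating $\lambda(st)$ produces $\lambda(st)\le c_1 s^{\alpha}\lambda(t)$, which is the lower-type condition.

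I do not expect any genuine obstacle: the proof is a single change of variables performed in each direction. The only points deserving care are the bookkeeping of the range constraints (that $s=r_1/r_2\in(0,1]$ exactly when $r_1\le r_2$, and conversely that $r_1=st\le t=r_2$ exactly when $s\le 1$), together with the harmless invocation of the positivity of $\lambda$ when clearing denominators.
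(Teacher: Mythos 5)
Your proof is correct and follows essentially the same route as the paper's: both directions are handled by the same elementary substitution ($s=r_1/r_2$, $t=r_2$ in one direction; $r_1=st$, $r_2=t$ in the other), with only the order of the two implications reversed relative to the paper. Your version is slightly more careful about tracking the range constraints and the positivity of $\lambda$, but the argument is identical in substance.
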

\begin{proof}
Suppose first \eqref{equitipo} holds and $0< s\leq 1$, so that  $st\leq t$ for all $t>0$. Then \eqref{equitipo} takes the following form:
\begin{equation*}
\frac{t^{\alpha}}{\lambda(t)}\leq c_1 \frac{(st)^{\alpha}}{\lambda(st)}.
\end{equation*}
From this we obtain that $\lambda$ is of lower type  $\alpha$. On the other hand if $\lambda$ is of lower type  $\alpha$ and we assume $0< r_1\leq r_2$ then
\begin{equation*}
\lambda (r_1)=\lambda \Bigl({\frac{r_1}{r_2}} \:  r_2\Bigr)\leq c_1 \Bigl(\frac{r_1}{r_2} \Bigr)^{\alpha}\lambda(r_2),
\end{equation*}
and \eqref{equitipo} is satisfied.
\end{proof}

\section{The Riesz operator in a upper doubling environment }\label{operator}
The fractional integral operator or the Riesz potential $I_{\alpha}$, $0 < \alpha < n$, is defined by
\begin{equation}\label{defIalfa}
I_{\alpha} f(x)= \int_{\mathbb{R}^n}  \frac{f(y)}{|x-y|^{n-\alpha}} dy,
\end{equation}
$x\in \mathbb{R}^n$,
for any suitable function $f$ on $\mathbb{R}^n$. Clearly $I_{\alpha} f$ is well-defined for any locally bounded
function $f$ on $\mathbb{R}^n$. This operator was first studied by Hardy and Littlewood in the 1920's \cite{HL-I, HL-II}
 and extended by Sobolev \cite{Sob} in the 1930's. A well-known result for $I_{\alpha} f$ is the Hardy-Littlewood-Sobolev inequality: $\|I_{\alpha}f\|_{L^q} \leq C_{p,q} \|f\|_{L^p}$. That is, $I_{\alpha} f$ is bounded from $L^p(\mathbb{R}^n)$ to $L^q(\mathbb{R}^n)$ if and
only if $\frac{1}{q}=\frac{1}{p}-\frac{\alpha}{n}$, with $1 < p < \frac{n}{\alpha}$. (see e.g. \cite{St}).

These statements were generalized in many directions, for historical notices
and review of results see the book \cite{Sabook}.

As mentioned in the introduction, also fractional integrals over quasi-metric measure spaces are known to be considered in different forms.  There are natural  extensions to contexts of  quasi-metric measure spaces that arise from considering $|x-y|^n=\mu(B(x,|x-y|)) $, where $\mu$ is the $n$-dimensional Lebesgue measure or simply  $|x-y|^n$ as $n$-dimensional power of the  Euclidean distance between $x$ e $y$, or dealing $|x-y|^n$ as a quasi-distance between $x$ and $y$.

In what follows, we shall assume that $(X,d)$ is a  geometrically doubling  quasi-metric space, the $d$-balls are open sets,
 $\mu$ be a Borel measure on $X$ which is finite on bounded sets, positive on the balls and  $\mu(\{x\}) = 0$ for all $x \in X$.

We consider the following operators of potential type:
\begin{equation}
I_{\alpha}^Q f(x)= \int_{X} f(y) \frac{1}{d(x,y)^{Q-\alpha}} d\mu(y), \quad 0<\alpha<Q\leq n.
\end{equation}

\begin{equation}
I_{\gamma} f(x)= \int_{X} f(y) \frac{1}{d(x,y)^{1-\gamma}} d\mu(y), \quad 0<\gamma<1.
\end{equation}

\begin{equation}
K_{\gamma} f(x)= \int_{X} f(y) \frac{1}{\mu (B(x, d(x,y)))^{1-\gamma}} d\mu(y), \quad 0<\gamma<1.
\end{equation}

\begin{equation}
J_{\alpha} f(x)= \int_{X} f(y) \frac{d(x,y)^{\alpha}}{\mu (B(x, d(x,y)))} d\mu(y), \quad \alpha>0.
\end{equation}

We observe that from the results obtained in \cite{MS79}, it turns out  to be that given an arbitrary space of homogeneous type $(X, d,\mu)$, there exists a normal space $(X,\delta,\mu)$ of orden $\theta$, $\theta>0$,  such that the $L^p(X,d,\mu)$ coincides with $L^p(X,\delta,\mu)$. Then in the case $\mu$ is doubling, the study  on Lebesque spaces the boundedness of $I_{\gamma} f(x)$  or  $K_{\gamma} f(x)$ is indistinct, because  both operators are equivalents.

Obviously, if $\mu$ is  Ahlfors $Q$-regular, then $I_{\alpha}^Q f(x)$ and $J_{\alpha} f(x)$ are equivalents. This is what happens for example if we  consider the case of $s$-sets and $\mu$ is the restriction of the Hausdorff $s$-measure $\mathcal{H}^s$ to these sets.

If $\mu$ is doubling using \eqref{lower} we have $J_{\alpha} f(x)\leq \frac{1}{c_0}I_{\alpha}^N f(x)$, $f\geq 0$. Similarly,
$I_{\alpha}^n f(x)\leq J_{\alpha} f(x)$, $f\geq 0$, when $\mu$ is ``non-doubling'', i.e. \eqref{nondoub} holds. Moreover can be seen (\cite{KoMe1}, \cite{GaCuGa}) that for a measure $\mu$, finite over balls and not having any atoms, condition \eqref{nondoub} is necessary for the inequality $\|I_{\alpha}^n\|_q\leq C \|f\|_p, \frac{1}{q}=\frac{1}{p}-\frac{\alpha}{n}$ to hold.

In the general case, $c_0 r^N\leq \mu(B(x,r))\leq c_1 r^n$, where $n\leq N$ and $r\in (0,1)$, the operator $J_{\alpha}$ is better suited for lower Ahlfors $N$-regular quasi-metric measure spaces, and $I_{\alpha}^n$  is better adjusted for upper Ahlfors $n$-regular quasi-metric measure spaces.

The four potential type integral operators defined above can be viewed as special cases of the following operator
\begin{equation}\label{defIalfamulambda}
I_{\alpha}^{\lambda} f(x)= \int_{X} \frac{ d(x,y)^{\alpha}}{\lambda(x,d(x,y))} \, f(y) \, d\mu(y),
\end{equation}
where $\lambda$ is a dominating function for the  upper doubling measure $\mu$ and as function of the variable $r$ is of lower type greater than $\alpha$, whenever this integral is finite. Clearly, if $f$ is a bounded function with compact support, then the integral in \eqref{defIalfamulambda} is finite for almost every $x\in X$, according to \eqref{acotacionespacio}  we have $X=B(x,R_0)$.

\section{Boundedness of $I_{\alpha}^{\lambda}$ in Lebesgue spaces}\label{boundedness}
Now we  state one of our main results, which gives  a version of Hardy-Littlewood-Sobolev inequality in the context of upper doubling space. We describe those measure spaces with quasi-metrics on which the potential type operator maps $I_{\alpha}^{\lambda}:L^p(X,d,\mu) \to  L^{q(\cdot)}(X,d,\mu)$  boundedly.

\begin{theorem}\label{sufIlambda}
 Let $\alpha >0$, $1<p<q_-\leq q(x)\leq q_+<\infty$  for all  $x\in X$. We assume that  $(X,d)$ is  a  bounded  geometrically doubling quasi-metric space, such that the $d$-balls are open sets, and there exists a function $\lambda:X \times \mathbb{R}^+\to \mathbb{R}^+$ that is doubling and as function of the variable $r$ is non-decreasing and of lower type greater than $\alpha$.    If   $r^{\alpha}\leq \lambda(x,r)^{\frac{1}{p}-\frac{1}{q(x)}}$ and $\mu$ is a  Borel measure on $X$ which is finite on bounded sets, positive on the balls, $\mu(\{x\}) = 0$ for all $x \in X$ and upper doubling with dominating function $\lambda$, then $I_{\alpha}^{\lambda}$ is a bounded operator from $L^p(X,d,\mu)$ to
$L^{q(\cdot)}(X,d,\mu)$.
\end{theorem}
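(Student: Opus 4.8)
The plan is to follow the Hedberg-type strategy advertised in the introduction: establish a pointwise bound for $I_\alpha^\lambda f$ in terms of the modified maximal function $\tilde M f$, and then transfer it to the modular of $L^{q(\cdot)}$ using Lemma \ref{acotacionmaximal}. By positive homogeneity of $I_\alpha^\lambda$ and of the norm I may assume $f\ge 0$ and $\|f\|_p=1$, so that it suffices to show that the modular $I_{q(\cdot)}(I_\alpha^\lambda f)$ is bounded by a constant; the bound on the norm then follows from $I_{q(\cdot)}(g/\beta)\le\beta^{-q_-}I_{q(\cdot)}(g)$ for $\beta\ge 1$.

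First I would split, for a radius $\delta\in(0,R_0]$ to be chosen, $I_\alpha^\lambda f(x)=I_1(x)+I_2(x)$ into the integrals over $d(x,y)<\delta$ and $\delta\le d(x,y)$. For the near part $I_1$ I would decompose $\{d(x,y)<\delta\}$ into the dyadic annuli $\{2^{-k-1}\delta\le d(x,y)<2^{-k}\delta\}$; on each annulus the doubling of $\lambda$ gives $d(x,y)^\alpha/\lambda(x,d(x,y))\le C(2^{-k}\delta)^\alpha/\lambda(x,2^{-k}\delta)$, while $\int_{B(x,2^{-k}\delta)}f\,d\mu\le \mu(B(x,3K_1 2^{-k}\delta))\,\tilde M f(x)\le C\lambda(x,2^{-k}\delta)\,\tilde M f(x)$ by \eqref{updoub} and the doubling of $\lambda$. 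Summing the resulting geometric series, which converges since $\alpha>0$, yields $I_1(x)\le C\delta^\alpha\,\tilde M f(x)$. For the far part $I_2$ I would apply H\"older's inequality with exponents $p,p'$, reducing matters to $J:=\int_{d(x,y)\ge\delta}\bigl(d(x,y)^\alpha/\lambda(x,d(x,y))\bigr)^{p'}d\mu(y)$. Decomposing again into annuli $\{2^k\delta\le d(x,y)<2^{k+1}\delta\}$ and using \eqref{updoub} with the doubling of $\lambda$, the $k$-th term is controlled by $(2^k\delta)^{\alpha p'}\lambda(x,2^k\delta)^{1-p'}$. Here the hypothesis $r^\alpha\le\lambda(x,r)^{1/p-1/q(x)}$ enters: raising it to the $p'$ power and using $p'/p=p'-1$, the exponent of $\lambda(x,2^k\delta)$ collapses to exactly $-p'/q(x)$, so the $k$-th term is at most $\lambda(x,2^k\delta)^{-p'/q(x)}$. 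The lower type $\alpha_0>\alpha$ of $\lambda$, in the form $\lambda(x,2^k\delta)\ge c\,2^{k\alpha_0}\lambda(x,\delta)$ coming from Proposition \ref{proptipoinferior}, makes the series geometric with ratio bounded uniformly in $x$ because $q(x)\le q_+$, giving $J\le C\lambda(x,\delta)^{-p'/q(x)}$ and hence $I_2(x)\le C\lambda(x,\delta)^{-1/q(x)}\|f\|_p$.

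Combining the two estimates, and inserting the hypothesis once more into $I_1$ via $\delta^\alpha\le\lambda(x,\delta)^{1/p-1/q(x)}$, I obtain $I_\alpha^\lambda f(x)\le\lambda(x,\delta)^{-1/q(x)}\bigl(C_1\lambda(x,\delta)^{1/p}\,\tilde M f(x)+C_2\bigr)$. I would then optimize by choosing $\delta=\delta(x)$ so that $\lambda(x,\delta)\approx(\tilde M f(x))^{-p}$, which is possible for a dyadic $\delta$ thanks to the monotonicity and doubling of $\lambda$ and the fact that $\lambda(x,r)\to 0$ as $r\to 0$; this produces the Hedberg inequality $I_\alpha^\lambda f(x)\le C(\tilde M f(x))^{p/q(x)}$. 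The main obstacle is precisely this optimization at the upper endpoint: when $\tilde M f(x)$ is so small that the balancing $\delta$ would exceed $R_0$, I must instead take $\delta=R_0$, observe that $I_2$ vanishes since $X=B(x,R_0)$ by \eqref{acotacionespacio}, and control the remaining contribution by $\lambda(x,R_0)^{-1/q(x)}$, whose $q(x)$-th power is at most $\mu(X)^{-1}$ uniformly because $\mu(X)=\mu(B(x,R_0))\le\lambda(x,R_0)$; integrating over this exceptional set then costs only $\mu(X)^{-1}\mu(X)$.

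Finally, raising the Hedberg bound to the power $q(x)$ gives $|I_\alpha^\lambda f(x)|^{q(x)}\le C^{q(x)}(\tilde M f(x))^{p}\le C'(\tilde M f(x))^{p}$, since $q(x)\le q_+$. Integrating over $X$ and using the $L^p$-boundedness of $\tilde M$ from Lemma \ref{acotacionmaximal}, valid because $p>1$, I conclude $I_{q(\cdot)}(I_\alpha^\lambda f)\le C'\int_X(\tilde M f)^{p}\,d\mu + C'' \le C'\,\|\tilde M f\|_p^{p}+C''\le C'''\,\|f\|_p^{p}=C'''$, where $C''$ absorbs the bounded contribution of the exceptional set. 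The norm estimate $\|I_\alpha^\lambda f\|_{q(\cdot)}\le C$ then follows from the modular–norm relation, completing the proof.
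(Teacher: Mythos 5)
Your proposal is correct, and at the top level it is the same Hedberg-type argument as the paper's: split $I_{\alpha}^{\lambda}f$ at a radius, bound the near part by $\delta^{\alpha}\tilde{M}f(x)$ through dyadic annuli and the upper doubling inequality \eqref{updoub}, bound the far part via H\"older, balance the radius against $\tilde{M}f(x)$, treat the case where the balancing radius would exceed $R_0$ separately using \eqref{acotacionespacio}, and finish with a modular estimate based on Lemma \ref{acotacionmaximal}. The genuine difference is in the far part, and your version is actually tighter than the paper's. The paper applies H\"older on each annulus, holds the hypothesis $r^{\alpha}\leq\lambda(x,r)^{\frac{1}{p}-\frac{1}{q(x)}}$ in reserve, bounds $\lambda(x,2^{k+1}r)^{-1/p}\leq\lambda(x,r)^{-1/p}$ by monotonicity alone, and is then left with the sum $\sum_{k=1}^{m-1}2^{k\alpha}+2^{m\alpha}\approx (R_0/r)^{\alpha}$, which it absorbs into a ``constant depending on $R_0$''; that sum is not uniformly bounded in $r$, and if one accounts for it correctly the factor $r^{\alpha}$ in the paper's bound for $I_2$ cancels, leaving $\|f\|_p\,\lambda(x,r)^{-1/p}$, which after the balancing step gives only $\tilde{M}f(x)$ rather than $\tilde{M}f(x)^{p/q(x)}\|f\|_p^{1-p/q(x)}$. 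You avoid this issue entirely by inserting the hypothesis annulus by annulus, so that each term collapses to $\lambda(x,2^k\delta)^{-p'/q(x)}$, and then invoking the lower type $\alpha_0>\alpha$ through Proposition \ref{proptipoinferior} to make the tail series geometric with ratio at most $2^{-\alpha_0 p'/q_+}<1$, uniformly in $x$; this yields directly $I_2\lesssim \lambda(x,\delta)^{-1/q(x)}\|f\|_p$, which is exactly the form needed for the Hedberg balancing. It is also the only point of either argument where the positivity of the lower type (beyond the kernel monotonicity of Proposition \ref{proptipoinferior}) does real work. Your endpoint treatment (taking $\delta=R_0$, noting $I_2=0$, and charging the exceptional set $\mu(X)^{-1}\mu(X)$ in the modular) and your modular-to-norm step via $I_{q(\cdot)}(g/\beta)\leq\beta^{-q_-}I_{q(\cdot)}(g)$ for $\beta\geq 1$ are equivalent to the paper's computation, just organized around the normalization $\|f\|_p=1$.
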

\begin{proof}
We are going to adapt to our context the proof given by Hedberg in \cite{Hed}.
Let $B=B(x,r)$, $x\in X$ and $r>0$. For $f\in L^p$ we write $f$ as  $f=f\chi_B + f \chi_{B^c}$, where $\chi_B$ is the characteristic function on the ball and $\chi_{B^c}$ the characteristic function on the complement of the ball.  Then we have,
\begin{align*}
I_{\alpha}^{\lambda} f(x)
&= \int_{X} \frac{ d(x,y)^{\alpha}}{\lambda(x,d(x,y))}\ f(y)\  d\mu(y)\\
&= \int_{B} \frac{ d(x,y)^{\alpha}}{\lambda(x,d(x,y))}\ f(y)\  d\mu(y) +
\int_{B^c} \frac{ d(x,y)^{\alpha}}{\lambda(x,d(x,y))}\ f(y)\  d\mu(y) \\
&=I_1+ I_2.
\end{align*}
We estimate the first integral considering one decomposition of the ball $B$ in concentric annuli that we denote $C_j=B(x, 2^{-j+1}r)- B(x, 2^{-j}r)$, $j=1,2,\ldots$ . We obtain
\begin{equation*}\label{I1}
|I_1|\leq \sum_{j=1}^{\infty}\int_{C_j}  |f(y)|  \frac{d(x,y)^{\alpha}}{\lambda(x,d(x,y))}\ d\mu(y),
\end{equation*}
We note that by virtue the hypothesis about the lower type of the function $\lambda$ we can assure that $\lambda$ is of lower type $\alpha$,  applying the property given in the Proposition \ref{proptipoinferior}, we have
\begin{equation*}
\frac{r^\alpha}{\lambda(x,r)}\leq c_1\frac{s^\alpha}{\lambda(x,s)}, \quad s<r \text{\ and\ } \forall x\in X.
\end{equation*}
From this we obtain
\begin{align*}
|I_1|&\leq \sum_{j=1}^{\infty}\int_{B(x, 2^{-j+1}r)} |f(y)| c_1 \frac{(2^{-j}r)^{\alpha}}{\lambda(x,2^{-j}r)} d\mu(y)\\
&= c_1 r^{\alpha } \sum_{j=1}^{\infty} (2^{-j})^{\alpha} \frac{1}{\lambda(x,2^{-j}r)} \frac{\mu(B(x, 3 K_1 (2^{-j}r)))}{\mu(B(x, 3 K_1 (2^{-j}r)))} \int_{B(x, 2^{-j+1}r)} |f(y)| d\mu(y)\\
&\leq c_1 r^{\alpha } \sum_{j=1}^{\infty} (2^{-j})^{\alpha} \frac{\lambda(x, 3 K_1 (2^{-j}r))}{\lambda(x,2^{-j}r)} \frac{\mu(B(x, 3 K_1 (2^{-j}r)))}{\lambda(x, 3 K_1 (2^{-j}r))} \tilde M f(x)\\
&\leq c_1r^{\alpha } \sum_{j=1}^{\infty} (2^{-j})^{\alpha} (C_{\lambda})^{\ell}  \Omega(x) \tilde M f(x)\\
& \leq  C_1 r^{\alpha } \Omega(x) \tilde M f(x),
\end{align*}
where $C_{\lambda}$ denote the constant in \eqref{updoub} and $\Omega$ is the maximal function defined by $\Omega(x)=\sup_{R>0} \frac{\mu(B(x, R))}{\lambda(x, R)}$. The constant $C_1$ depends only on $K_1$, $C_{\lambda}$, $c_1$ and $\alpha$.\\
Furthermore, as we have assumed that the underlying metric space $X$ is bounded, then there exists a constant $R_0>0$ such that $R_0={\rm diam\ } X$.
For this $R_0$  there exists $m\in \mathbb{N}_0$ such that $2^{m}r < R_0\leq2^{m+1}r$. Now set  $D_k=B(x, 2^{k+1}r)- B(x, 2^{k}r)$ and  then  we decompose $B^{c}(x,r)=\bigcup_{j=0}^{m-1} D_k \bigcup (B(x,R_0)\setminus B(x,2^{m}r))$. We estimate the second integral in a similar way considering this decomposition of the  complement of the ball $B$.
\begin{align*}
|I_2|&\leq \sum_{k=1}^{m-1}\int_{D_k} \!\!\!|f(y)|  \frac{d(x,y)^{\alpha}}{\lambda(x,d(x,y))} d\mu(y)+ \int_{B(x,R_0)\setminus B(x,2^{m}r)}\!\!\! |f(y)|  \frac{d(x,y)^{\alpha}}{\lambda(x,d(x,y))} d\mu(y)\\
&\leq \sum_{k=1}^{m-1}\int_{B(x, 2^{k+1}r)} |f(y)|  c_1 \frac{(2^k r)^{\alpha}}{\lambda(x,2^k r)} d\mu(y) + \int_{B(x,R_0)}\!\!\! |f(y)|  c_1 \frac{(2^m r)^{\alpha}}{\lambda(x,2^m r)} d\mu(y).
\end{align*}
We then apply  H\"older's inequality, in each summands, to obtain
\begin{align*}
|I_2|&\leq c_1 r^{\alpha }\Biggl[ \sum_{k=1}^{m-1} \Biggl(\int_{B(x, 2^{k+1}r)} |f(y)|^p d\mu(y)\Biggr)^{1/p} \Biggl(\int_{B(x, 2^{k+1}r)} \frac{(2^k r)^{\alpha p'}}{(\lambda(x,2^kr ))^{p'}}  d\mu(y)\Biggr)^{1/p'}\\
& \quad + \Biggl(\int_{B(x, R_0)} |f(y)|^p d\mu(y)\Biggr)^{1/p} \Biggl(\int_{B(x, R_0)} \frac{(2^m r)^{\alpha p'}}{\lambda(x,2^mr)^{p'}} d\mu(y)\Biggr)^{1/p'}\Biggr].
\end{align*}
From the hypothesis about of lower type of $\lambda(x,r)$ in the second variable, the doubling property of $\lambda$ and the definition of maximal function $\Omega$ we have

\begin{align*}
|I_2|&\leq c_1 r^{\alpha } \|f\|_p  \Biggl[\sum_{k=1}^{m-1}  \frac{(2^k )^{\alpha }}{\lambda(x,2^k r)}  \bigl(\mu(B(x, 2^{k+1}r))\bigr)^{1/p'} + \frac{(2^m)^{\alpha}}{\lambda(x,2^m r)} \bigl(\mu(B(x,R_0))\bigr)^{1/p'}\Biggr]\\
&\leq c_1 r^{\alpha }\|f\|_p \left[ \sum_{k=1}^{m-1}  C_{\lambda} \frac{2^{k\alpha}}{\lambda(x,2^{k+1} r)}   \bigl(\lambda(x, 2^{k+1}r)\bigr)^{1/p'}  + C_{\lambda} \frac{2^{m\alpha}}{\lambda(x,R_0)} \bigl(\lambda(x,R_0)\bigr)^{1/p'}\right]\\
&\leq  c_1  C_{\lambda} r^{\alpha } \|f\|_p \left[\sum_{k=1}^{m-1} 2^{k\alpha}  \bigl(\lambda(x, 2^{k+1}r)\bigr)^{-1/p} +  2^{m \alpha} \bigl(\lambda(x,R_0)\bigr)^{-1/p}  \right].
\end{align*}
Using the fact that the application $r\to \lambda(x,r)$ is non-decreasing,  we observe that $\lambda(x,r)\leq \lambda(x, 2^ {k+1}r)\leq \lambda(x, R_0), k=0, 1,\ldots, m-1$ and we conclude that
\begin{align*}
|I_2|&\leq c_1  C_{\lambda} r^{\alpha } \|f\|_p  \bigl(\lambda(x,r)\bigr)^{-1/p} \Biggl[\sum_{j=1}^{m-1} 2^{k\alpha}   +  2^{m \alpha} \Biggr]\\
&\leq C_2 r^{\alpha } \|f\|_p \bigl(\lambda(x,r)\bigr)^{-1/p},
\end{align*}
where $C_2$ is a constant that depends on $c_1$, $C_{\lambda}$, $\alpha$ and $R_0$.\\
The estimates for $I_1$ and $I_2$ imply the following pointwise inequality:
\begin{equation*}
|I_{\alpha}^{\lambda} f(x)|\leq C_3 (r^{\alpha } \Omega(x) \tilde M f(x) + r^{\alpha } \|f\|_p  \bigl(\lambda(x,r)\bigr)^{-1/p},
\end{equation*}
for arbitrary $x\in X$ and $r>0$. Taking into account condition \eqref{updoub} we deduce that $\Omega(x)\leq 1$ for all $x\in X$. Hence
\begin{equation}\label{estimacionpuntual}
|I_{\alpha}^{\lambda} f(x)|\leq C_3 (r^{\alpha } \tilde M f(x) + r^{\alpha } \|f\|_p  \bigl(\lambda(x,r)\bigr)^{-1/p} ),
\end{equation}
for arbitrary $x\in X$ and $r>0$.
Choose $r>0$ such that $\lambda(x,r)\geq \frac{\|f\|^p_p }{\tilde M f(x)^p}$. Then from inequality $r^\alpha\leq \lambda(x,r)^{\frac{1}{p}-\frac{1}{q(x)}}$ given in the hypotheses, results
\begin{equation*}
r^\alpha\leq \biggl(\frac{\tilde M f(x)}{\|f\|_p}\biggr)^{\frac{p}{q(x)}-1}.
\end{equation*}
Consequently
\begin{equation}\label{estimacionmenorR0}
|I_{\alpha}^{\lambda} f(x)|\leq C_4 \biggl( \tilde M f(x) \bigr)^{\frac{p}{q(x)}} \|f\|_p^{1-\frac{p}{q(x)}}\biggr),
\end{equation}
Note that this choice of $r$ is all right as long as $\lambda(x,r)$ does not exceed $\lambda(x,R_0)$;
however, if it does, it is because $\tilde M f(x)^p \leq \|f\|_p^{p} \,  \lambda(x,r)$ for these $x$'s, by
setting $r = R_0=\text{diam}(X)$ in \eqref{estimacionpuntual}, we see that
\begin{equation}\label{estimacionR0}
|I_{\alpha}^{\lambda}f(x)|\leq C_5  \|f\|_p.
\end{equation}
We just add \eqref{estimacionmenorR0} and \eqref{estimacionR0} and obtain that for all $x$ in $X$.
\begin{equation}\label{estimacionpuntualfinal}
|I_{\alpha}^{\lambda}f(x)|\leq C_6 \|f\|_p \bigl(\tilde Mf(x)^{\frac{p}{q(x)}} \|f\|_p^{-\frac{p}{q(x)}} +1\bigr).
\end{equation}
By \eqref{estimacionpuntualfinal} it readily follows that
\begin{equation*}\label{estimacionennorma}
\|I_{\alpha}^{\lambda}f\|_{q(\cdot)} \leq c \|f\|_p
\end{equation*}
Indeed, since $\rho$ is order preserving and convex modular (see section \ref{setting}) we get
\begin{align*}
\int_{X} &\biggl(\frac{|I_{\alpha}^{\lambda}f(x)|}{C_6 2 (C_0^p+\mu(X))^{\frac{1}{q(x)}}\|f\|_p}\biggr)^{q(x)} d\mu(x)\\
&\leq \int_{X} \biggl(\frac{\tilde Mf(x)^{\frac{p}{q(x)}} \|f\|_p^{-\frac{p}{q(x)}} +1}{2 (C_0^p+\mu(X))^{\frac{1}{q(x)}}} \biggr)^{q(x)} d\mu(x) \\
&\leq \int_{X} \biggl(\frac{1}{2}\frac{\tilde Mf(x)^{\frac{p}{q(x)}} \|f\|_p^{-\frac{p}{q(x)}}}{ (C_0^p+\mu(X))^{\frac{1}{q(x)}}}+  \frac{1}{2} \frac{1}{(C_0^p+\mu(X))^{\frac{1}{q(x)}}}\biggr)^{q(x)} d\mu(x) \\
&\leq \frac{1}{2} \frac{1}{C_0^p+\mu(X)} \Biggl(\int_{X} \frac{|\tilde Mf(x)|^{p}}{\|f\|_p^{p}} d\mu(x) + \int_{X} 1 d\mu(x)\Biggr)\\
&= \frac{1}{C_0^p+\mu(X)} \Biggl( \frac{\|\tilde Mf\|_{p}^{p}}{\|f\|_p^{p}} +\mu(X) \Biggr)\\
&\leq \frac{1}{C_0^p+\mu(X)} \Biggl( C_0^p\frac{\|f\|_{p}^{p}}{\|f\|_p^{p}} +\mu(X) \Biggr)\\
&=1,
\end{align*}
where  $C_0$ is the constant in the inequality $\|\tilde Mf\|_{p}\leq C_0 \|f\|_{p}$ from Lemma \ref{acotacionmaximal}.\\
Now
\begin{align*}
\|I_{\alpha}^{\lambda} f\|_{q(\cdot)} &\leq C_6 2 (C_0^p+\mu(X))^{\frac{1}{q(x)} }\|f\|_p\\
&\leq C_6 2 \max\{(C_0^p+\mu(X))^{\frac{1}{q_+}}, (C_0^p+\mu(X))^{\frac{1}{q_-}}\}\|f\|_p\\
&=c \|f\|_p,
\end{align*}
where we have considered that if $(C_0^p+\mu(X))\geq 1$ then $(C_0^p+\mu(X))^{\frac{1}{q(x)}}\leq (C_0^p+\mu(X))^{\frac{1}{q_-}}$ since
$\frac{1}{q(x)}\leq \frac{1}{q_-}$, but if $(C_0^p+\mu(X))<1$ then $(C_0^p+\mu(X))^{\frac{1}{q(x)}}\leq (C_0^p+\mu(X))^{\frac{1}{q_+}}$ since
$\frac{1}{q(x)}\geq \frac{1}{q_+}$. Here $q_+$ and $q_-$ correspond to the definitions given in \eqref{pmenospmas}.
\end{proof}
\begin{remark}
In the proof we have used the maximal function $\Omega(x)=\sup_{R>0} \frac{\mu(B(x, R))}{\lambda(x, R)}$ which describes a geometric property of the measure of the ball, this idea  was taken of \cite{EKM}, where was used for the case of $\lambda(x,r)=Cr^n$.
\end{remark}
\begin{remark}
If $f\in L^{p(\cdot)}$ and $p$ is a log-H\"older continuous exponent  the proof of the above Theorem can  be modified slightly and the result de boundedness of  $I_{\alpha}^{\lambda}$ continue valid in the setting of variable exponent Lebesgue.
\end{remark}

The next lemma is a variant of a lemma due to Diening \cite{Di2} and give us an estimate from below of the norm of characteristic function on the ball with measure less  or equal than one.
\begin{lemma}\label{acotacionporabajo}
Let $(X,d,\mu)$ be a quasi-metric measure space, $\mu$ a finite measure over balls  and a function $p(\cdot):X\to [1,\infty)$, $1<p_-\leq p(\cdot)\leq p_+<\infty$, then for any ball $B$ such that $\mu(B)\leq 1$,
\begin{equation}
\|\chi_{B}\|_{p(\cdot),\Omega}\geq C \mu(B)^{1/p(x)}
\end{equation}
\end{lemma}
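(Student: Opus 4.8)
The plan is to argue directly from the Luxemburg-type definition \eqref{normapvariable} of the norm through the modular \eqref{modular}, reducing the whole statement to an elementary estimate of $\int_B \lambda^{-p(y)}\,d\mu(y)$.

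First I would record the crucial normalization. Since $\mu(B)\le 1$, the modular of the characteristic function itself is $I_{p(\cdot)}(\chi_B)=\int_B 1\,d\mu=\mu(B)\le 1$, so by \eqref{normapvariable} we get $\lambda_0:=\|\chi_B\|_{p(\cdot)}\le 1$. This bound $\lambda_0\le 1$ is what makes the argument run, because for a base $\le 1$ the map $t\mapsto \lambda_0^{-t}$ is monotone and can be compared across the range of the exponent on $B$. Using the definition of the norm as an infimum together with the monotonicity/continuity properties (i)--(v) of the convex modular (pass to the limit $\lambda\downarrow\lambda_0$ by monotone convergence, which is legitimate since $p_+<\infty$), I would conclude that at $\lambda=\lambda_0$ one has $\int_B \lambda_0^{-p(y)}\,d\mu(y)\le 1$.

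Next, writing $p_-(B):=\operatorname*{ess\,inf}_{y\in B}p(y)$ and using $\lambda_0\le 1$ so that $\lambda_0^{-p(y)}\ge \lambda_0^{-p_-(B)}$ for a.e. $y\in B$, the previous inequality becomes $\lambda_0^{-p_-(B)}\mu(B)\le 1$, which rearranges to the clean lower bound
\begin{equation*}
\|\chi_B\|_{p(\cdot)}=\lambda_0\ge \mu(B)^{1/p_-(B)}.
\end{equation*}
This is the honest core estimate and costs nothing beyond $\mu(B)\le 1$.

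Finally I would upgrade the exponent $p_-(B)$ to the pointwise value $p(x)$ at the cost of the constant $C$. Since $\mu(B)\le 1$ and $p_-(B)\le p(x)$, the two quantities are related by
\begin{equation*}
\mu(B)^{1/p_-(B)}=\mu(B)^{1/p(x)}\,\mu(B)^{\frac{p(x)-p_-(B)}{p_-(B)\,p(x)}},
\end{equation*}
so the lemma follows once the correction factor is bounded below by a positive constant, i.e. once $\bigl(p(x)-p_-(B)\bigr)\log\frac{1}{\mu(B)}$ is shown to be uniformly bounded. This oscillation estimate is exactly what makes the result a variant of Diening's lemma, and it is the step I expect to be the main obstacle: it is not formal and relies on the standing regularity of the exponent (log-Hölder continuity, as in the preceding remark) together with a lower bound on $\mu(B)$ in terms of the radius, which together turn the logarithmic decay of the oscillation of $p$ over $B$ into a uniform constant. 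Once that factor is controlled by some $C>0$ independent of $B$ and of $x\in B$, chaining the two displays yields $\|\chi_B\|_{p(\cdot)}\ge C\,\mu(B)^{1/p(x)}$, as claimed.
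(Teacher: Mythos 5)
Your core estimate and its derivation are correct: since $\mu(B)\le 1$ the modular of $\chi_B$ is at most $1$, hence $\lambda_0:=\|\chi_B\|_{p(\cdot)}\le 1$; the modular inequality at $\lambda_0$ (justified by monotone convergence) together with $\lambda_0^{-p(y)}\ge \lambda_0^{-p_-(B)}$ for a.e.\ $y\in B$ gives
\begin{equation*}
\|\chi_B\|_{p(\cdot)}\ \ge\ \mu(B)^{1/p_-(B)}, \qquad p_-(B):=\operatorname{ess\,inf}_{y\in B}\, p(y).
\end{equation*}
Your diagnosis of the last step is also exactly right: upgrading $p_-(B)$ to $p(x)$ requires the oscillation bound $\bigl(p(x)-p_-(B)\bigr)\log\frac{1}{\mu(B)}\le C$, a log-H\"older-type hypothesis which the lemma does not assume. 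That gap cannot be filled: the lemma as stated is false. On $[0,1]$ with Lebesgue measure take $B=(0,\epsilon)$, let $p=2$ on an interval of length $\epsilon^{10}$ centered at the midpoint $x$ of $B$ and $p=3/2$ on the rest of $B$; testing $\lambda=2\epsilon^{2/3}$ in the modular shows $\|\chi_B\|_{p(\cdot)}\le 2\epsilon^{2/3}$, while $\mu(B)^{1/p(x)}=\epsilon^{1/2}$, so no constant $C$ independent of the ball can work as $\epsilon\to 0$.

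What you could not see is that the paper's own proof does not overcome this obstacle either; it sidesteps it by an inequality that runs the wrong way. The paper replaces $p(\cdot)$ by $p_+(B):=\sup_B p$ inside the modular and asserts
\begin{equation*}
\inf\Bigl\{0<\lambda<1 : \int_B \lambda^{-p(y)}\,d\mu(y)\le 1\Bigr\}\ \ge\ \inf\Bigl\{0<\lambda<1 : \lambda^{-p_+(B)}\mu(B)\le 1\Bigr\}\ =\ \mu(B)^{1/p_+(B)},
\end{equation*}
after which its conclusion is immediate, since $\mu(B)\le 1$ trivially gives $\mu(B)^{1/p_+(B)}\ge \mu(B)^{1/p(x)}$. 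But for $0<\lambda<1$ one has $\lambda^{-p(y)}\le\lambda^{-p_+(B)}$ on $B$, so every $\lambda$ admissible for the constant exponent $p_+(B)$ is admissible for $p(\cdot)$: the set on the left is the larger one, and taking infima over nested sets yields $\|\chi_B\|_{p(\cdot)}\le\mu(B)^{1/p_+(B)}$, an upper bound rather than the claimed lower bound. The only elementary lower bound is yours, with $p_-(B)$, and then the concluding step would need $\mu(B)^{1/p_-(B)}\ge C\mu(B)^{1/p(x)}$, which is precisely the oscillation estimate you flagged. In short: your argument is sound up to the point where you stopped, and the step you identified as the main obstacle is a genuine defect of the lemma (and of the paper's proof), repairable only by adding hypotheses such as log-H\"older continuity of $p$ together with a lower bound on $\mu(B)$ in terms of the radius, as in Diening's original lemma.
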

\begin{proof}
Since $\mu(B)\leq 1$ always is true for $\lambda\geq 1$ that $\int_{B} \lambda^{-p(x)}d\mu(x)\leq\mu(B)\leq 1$, and since  $p^+<\infty$, by the definition of the norm on $L^{p(\cdot)}(X)$ we get.
\begin{align*}
\|\chi_{B}\|_{p(\cdot),\Omega} &=\inf\{\lambda >0 : \int_{B} \lambda^{-p(x)}d\mu(x)\leq 1\}\\
& =\inf\{0<\lambda<1 : \int_{B} \lambda^{-p(x)}d\mu(x)\leq 1\}\\
&\geq \inf\{0<\lambda<1 : \int_{B} \lambda^{-p_+(B)}d\mu(x)\leq 1\}\\
&=\mu(B)^{1/p_+}.
\end{align*}
Moreover
\begin{align*}
\mu(B)^{1/p_+}&=\mu(B)^{1/p(x)} \mu(B)^{1/p_+ - 1/p(x)}\\
&\geq \mu(B)^{1/p(x)}\mu(B)^{p_+ -p_+/p_+p_-}\\
&= \mu(B)^{1/p(x)}.
\end{align*}
\end{proof}
\begin{theorem}\label{necIlambda}
For a measure $\mu$, finite over balls and not having any atoms, if $r^{\alpha}=\lambda(x,r)^{\frac{1}{p}-\frac{1}{q(x)}}$  with $\alpha>0$ and $1<p<q_-\leq q(x)\leq q_+<\infty$  for all $x\in X$, then the condition $\mu(B(x,r))\leq C' \lambda(x,r)$  for some constant $C'$ is necessary for $\|I_{\alpha}^{\lambda} f\|_{q(\cdot)}\leq C \|f\|_p$ to hold,  where $\lambda$ as function of the variable $r$ is of lower type greater  than $\alpha$.
\end{theorem}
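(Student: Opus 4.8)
The plan is to prove necessity by the classical device of testing the boundedness inequality against characteristic functions of balls and reading off the growth bound from the outcome. Fix $x_0\in X$ and $r>0$, write $B=B(x_0,r)$, and take the test function $f=\chi_B$, so that $\|f\|_p=\mu(B)^{1/p}$. The heart of the matter is to obtain a clean pointwise lower bound for $I_\alpha^\lambda f$ on $B$ and a lower bound for $\|\chi_B\|_{q(\cdot)}$, and then to let the balance relation $r^\alpha=\lambda(x,r)^{1/p-1/q(x)}$ do the bookkeeping.

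First I would bound $I_\alpha^\lambda f$ from below on $B$. For $x,y\in B$ the quasi-triangle inequality gives $d(x,y)<2K_1r$, and since $\lambda(x,\cdot)$ is of lower type greater than $\alpha$ (hence of lower type $\alpha$), Proposition \ref{proptipoinferior} makes $t\mapsto t^\alpha/\lambda(x,t)$ almost decreasing, whence
\[
\frac{d(x,y)^\alpha}{\lambda(x,d(x,y))}\geq \frac{1}{c_1}\,\frac{(2K_1r)^\alpha}{\lambda(x,2K_1r)}\geq c\,\frac{r^\alpha}{\lambda(x,r)},
\]
the last step using the doubling of $\lambda$ to descend from $2K_1r$ to $r$. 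Replacing $\lambda(x,r)$ by the comparable $\lambda(x_0,r)$ via \eqref{laotralambda} (legitimate because $d(x,x_0)<r$) and integrating in $y$ over $B$ yields
\[
I_\alpha^\lambda f(x)\geq C_7\,\frac{r^\alpha}{\lambda(x_0,r)}\,\mu(B),\qquad x\in B .
\]
Since the modular is order preserving, this gives $\|I_\alpha^\lambda f\|_{q(\cdot)}\geq C_7\,(r^\alpha/\lambda(x_0,r))\,\mu(B)\,\|\chi_B\|_{q(\cdot)}$.

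Next I would bring in Lemma \ref{acotacionporabajo}: for a ball with $\mu(B)\leq 1$ it gives $\|\chi_B\|_{q(\cdot)}\geq C_8\,\mu(B)^{1/q(x_0)}$. Inserting this together with $\|f\|_p=\mu(B)^{1/p}$ into the hypothesis $\|I_\alpha^\lambda f\|_{q(\cdot)}\leq C\|f\|_p$ produces
\[
\frac{r^\alpha}{\lambda(x_0,r)}\,\mu(B)^{\,1+\frac{1}{q(x_0)}-\frac1p}\leq C' .
\]
Now I would substitute the balance relation at $x_0$, namely $r^\alpha=\lambda(x_0,r)^{1/p-1/q(x_0)}$, which turns $r^\alpha/\lambda(x_0,r)$ into $\lambda(x_0,r)^{-\beta}$ with $\beta=1-\tfrac1p+\tfrac1{q(x_0)}$, exactly matching the exponent on $\mu(B)$. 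The estimate collapses to $(\mu(B)/\lambda(x_0,r))^{\beta}\leq C'$, and since $\beta\geq 1-\tfrac1p>0$ uniformly in $x_0$ (using $p>1$ and $q_+<\infty$), raising to the power $1/\beta$ delivers $\mu(B(x_0,r))\leq C''\lambda(x_0,r)$ with $C''$ independent of $x_0$ and $r$.

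The main obstacle is precisely the variable exponent. In the constant-$q$ situation one has $\|\chi_B\|_q=\mu(B)^{1/q}$ for every ball, so the argument runs for all radii; here the only clean lower bound for $\|\chi_B\|_{q(\cdot)}$ that Lemma \ref{acotacionporabajo} supplies requires $\mu(B)\leq 1$. This is where the no-atom hypothesis becomes essential: it guarantees $\mu(B(x_0,r))\to 0$ as $r\to 0$, so every sufficiently small ball meets $\mu(B)\leq 1$ and the argument applies on the small-scale range, which is exactly the scale governing the singularity of the kernel and hence the one that dictates the growth condition. A secondary point, easily dispatched, is keeping the final constant from degenerating; this is controlled by the uniform lower bound $\beta\geq 1-\tfrac1p$, so that $C''=\sup_{x_0}(C')^{1/\beta}$ is finite.
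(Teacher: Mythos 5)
Your argument through the case $\mu(B)\leq 1$ is sound, and it is essentially the paper's own proof: the paper also tests the inequality on (a $\lambda$-weighted variant of) $\chi_{B(a,r)}$, derives the same kernel lower bound from Proposition \ref{proptipoinferior}, the doubling of $\lambda$ and \eqref{laotralambda}, invokes Lemma \ref{acotacionporabajo}, and lets the relation $r^{\alpha}=\lambda(x,r)^{\frac1p-\frac1{q(x)}}$ cancel the exponents. Your use of the unweighted test function, compensated by moving the comparison $\lambda(x,r)\approx\lambda(x_0,r)$ into the kernel estimate, is a harmless variant of the paper's weighted choice $f=\chi_{B(a,r)}\lambda(\cdot,r)/\lambda(a,r)$.

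The genuine gap is the case $\mu(B(x_0,r))>1$. The theorem asserts $\mu(B(x,r))\leq C'\lambda(x,r)$ for \emph{all} balls with a uniform constant -- this is exactly the upper doubling condition \eqref{updoub} needed to match the sufficiency result, Theorem \ref{sufIlambda} -- and your argument simply never reaches those balls. The closing heuristic that the small scales ``dictate the growth condition'' is not a proof: non-atomicity only gives, at each fixed center $x_0$, a threshold $r^*(x_0)$ below which $\mu(B(x_0,r))\leq 1$, and this threshold has no uniform lower bound in $x_0$; nothing you have established prevents $\lambda(x,r)$ from being very small on some ball of measure greater than $1$. Passing from the small-ball estimate to the large-ball one is possible but requires an actual argument (for instance a covering/compactness argument combining geometric doubling, non-atomicity and \eqref{laotralambda}). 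The paper closes this case much more cheaply: when $\mu(B(a,r))\geq 1$ it uses the embedding $L^{q(\cdot)}\hookrightarrow L^{q_-}$, valid because $\mu(X)<\infty$ (this is the version of Theorem 2.8 of \cite{KR} announced in Section \ref{setting}), which yields $\|\chi_{B(a,r)}\|_{q(\cdot)}\geq c\,\|\chi_{B(a,r)}\|_{q_-}=c\,\mu(B(a,r))^{1/q_-}\geq c\,\mu(B(a,r))^{1/q(a)}$, the last inequality holding precisely because $\mu(B(a,r))\geq 1$ and $q_-\leq q(a)$; from there the computation is identical to yours. Adding this second case would complete your proof.
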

\begin{proof}
Let $I_{\alpha}^{\lambda}$ be bounded from $L^p$ to $L^{q(\cdot)}$, set $f=\chi_{B(a,r)} \frac{\lambda(\cdot,r)}{\lambda(a,r)}$, where $a\in X$, $r>0$. First we calculate
\begin{equation*}
\int_X \biggl(\chi_{B(a,r)}\!(x) \, \frac{\lambda(x,r)}{\lambda(a,r)}\biggr)^{p} d\mu(x)=\int_{B(a,r)} \biggl(\frac{\lambda(x,r)}{\lambda(a,r)} \biggr)^{p} d\mu(x)\leq\mu(B(a,r)),
\end{equation*}
since $\lambda(x,r)\leq \lambda(a,r)$ because of $d(x,a)<r$ according  to \eqref{laotralambda}. Then $\|I_{\alpha}^{\lambda} f\|_{q(\cdot)}\leq C\mu(B(a,r))^{\frac{1}{p}}$. It readily follows from \eqref{norm1} that
\begin{equation*}
\int_X \Biggl|\frac{I_{\alpha}^{\lambda}\bigl(\chi_{B(a,r)}\!(x) \, \frac{\lambda(x,r)}{\lambda(a,r)}\bigr)}{C\mu(B(a,r))^{\frac{1}{p}}}\Biggr|^{q(x)} d\mu(x)\leq 1.
\end{equation*}
For each $x\in B(a,r)$, we have
\begin{align*}
\frac{I_{\alpha}^{\lambda}\bigl(\chi_{B(a,r)}\!(x) \, \frac{\lambda(x,r)}{\lambda(a,r)}\bigr)}{C\mu(B(a,r))^{\frac{1}{p}}}&
=\frac{1}{C\mu(B(a,r))^{\frac{1}{p}}} \int_X \chi_{B(a,r)}\!(x) \, \frac{\lambda(x,r)}{\lambda(a,r)} \, \frac{d(x,y)^{\alpha}}{\lambda(x,d(x,y))} \, d\mu(y)\\
&\geq \frac{1}{C\mu(B(a,r))^{\frac{1}{p}}} \int_{B(a,r)} \, \frac{\lambda(x,r)}{\lambda(a,r)} \, \frac{(2 K_1 r)^{\alpha}}{\lambda(x,2 K_1 r)} \, d\mu(y)\\
&= \frac{(2 K_1)^{\alpha}}{C (C_{\lambda})^{\ell}} \, \frac{r^{\alpha}}{\lambda(a,r)}\mu(B(a,r))^{1-\frac{1}{p}}.
\end{align*}
Therefore
\begin{equation*}
\int_{B(a,r)}\Biggl|\frac{(2 K_1)^{\alpha}}{C (C_{\lambda})^{\ell}} \, \frac{r^{\alpha}}{\lambda(a,r)}\mu(B(a,r))^{1-\frac{1}{p}}\Biggr|^{q(x)} d\mu(x)\leq \int_X \Biggl|\frac{I_{\alpha}^{\mu_{\lambda}}\bigl(\chi_{B(a,r)}\!(x) \, \frac{\lambda(x,r)}{\lambda(a,r)}\bigr)}{C\mu(B(a,r))^{\frac{1}{p}}}\Biggr|^{q(x)} d\mu(x)\leq 1.
\end{equation*}
In consequence by definition of the norm given in \eqref{normapvariable} we get
\begin{equation}\label{estimacionmenoroigual1}
\frac{(2 K_1)^{\alpha}}{C (C_{\lambda})^{\ell}} \, \frac{r^{\alpha}}{\lambda(a,r)}\mu(B(a,r))^{1-\frac{1}{p}} \|\chi_{B(a,r)}\|_{q(\cdot)}\leq 1.
\end{equation}
Then
\begin{equation*}
\frac{(2 K_1)^{\alpha}}{C (C_{\lambda})^{\ell}} \, \frac{r^{\alpha}}{\lambda(a,r)}\mu(B(a,r))^{1-\frac{1}{p}} \|\chi_{B(a,r)}\|_{q_-} \leq\frac{(2 K_1)^{\alpha}}{C (C_{\lambda})^{\ell}} \, \frac{r^{\alpha}}{\lambda(a,r)}\mu(B(a,r))^{1-\frac{1}{p}} \|\chi_{B(a,r)}\|_{q(\cdot)}.
\end{equation*}
But if $\mu(B(a,r))\geq 1$ we have
\begin{equation*}
\frac{(2 K_1)^{\alpha}}{C (C_{\lambda})^{\ell}} \, \frac{r^{\alpha}}{\lambda(a,r)}\mu(B(a,r))^{1-\frac{1}{p}+\frac{1}{q(a)}}\leq \frac{1}{\tilde C} \, \frac{r^{\alpha}}{\lambda(a,r)}\mu(B(a,r))^{1-\frac{1}{p}+\frac{1}{q_-}} \leq 1.
\end{equation*}
If $\mu(B(a,r))\leq 1$ we use in \eqref{estimacionmenoroigual1}, the estimate of $\|\chi_{B(a,r)}\|_{q(\cdot)}$ given in Lemma \ref{acotacionporabajo}, and then we also obtain
$$
\frac{(2 K_1)^{\alpha}}{C (C_{\lambda})^{\ell}} \, \frac{r^{\alpha}}{\lambda(a,r)}\mu(B(a,r))^{1-\frac{1}{p}+\frac{1}{q(a)}}\leq 1
$$
Using that $r^{\alpha}=\lambda(x,r)^{\frac{1}{p}-\frac{1}{q(x)}}$ for all $x\in X$ we get
\begin{equation*}
\mu (B(a,r))^{1-\frac{1}{p}+\frac{1}{q(a)}}\leq \tilde C \frac{\lambda(a,r)}{r^{\alpha}} = \tilde C \lambda(a,r)^{1-\frac{1}{p}+\frac{1}{q(a)}}.
\end{equation*}
From the last inequality we conclude that $\mu(B(x,r))\leq C' \lambda(x,r)$ holds and the proof is complete.
\end{proof}
\begin{remark} When $\lambda(x,r)=r^n$ the condition about the upper type of $\lambda$ implies that $0<\alpha<n$ and the assumption $r^{\alpha}= \lambda(x,r)^{\frac{1}{p}-\frac{1}{q(x)}}$ takes the form $\frac{1}{q}=\frac{1}{p}-\frac{\alpha}{n}$, because from $r^{\alpha}=(r^n)^{\frac{1}{p}-\frac{1}{q(x)}}$ we get that $q$ is constant and satisfies the equality  mentioned above.
\end{remark}

\section{Other example of measure upper doubling}\label{doscomponentes}
One of the authors and Aimar in  \cite{AN} considered the problem of defining a measure when the metric measure space  $X$ is formed by two sets $X_1$ and $X_2$ of different dimensions with certain conditions on its contact. It is easily to state the same result considering quasi-metric spaces instead of metric spaces. If  each component $X_i$  supports  a measure $\mu_i$, $i=1,2$, we can adding this measures and obtain a measure supported on whole space, but $\mu_1+\mu_2$ is not necessarily doubling on $X$. In \cite{AN}   these natural measures $\mu_i$, $i=1,2$  are modified,  introducing  weights distance to contact point, in order to get a doubling measure for the whole space. We will show in this section that the measure defined in \cite{AN} gives an non-trivial example  of upper doubling measure since is not just doubling but variable non-doubling or variable  upper Ahlfors regular. Our setting is  characterized by defining the following three elements:
\begin{enumerate}[{\bf [a]}]
\item \emph{The pieces of $X$.}\\
$X=X_1\cup X_2\cup \{x_0\}$ with $X_1$, $X_2$, $\{x_0\}$ pairwise
disjoint and $(X,d)$ is a bounded metric space.
\item \emph{Contact of order zero.}\\
The components of  $X$ have contact of order zero in $x_0$ or $X$ satisfies the property $\mathcal{C}_{o}$ if and only if
$\{x_0\}=\overline{X}_1\cap \overline{X}_2$ and $d(x,x_0)\leq
c[d(x,X_1) + d(x,X_2)]$ for some
constant $c$ and every $x\in X$.
\item \emph{Dimensions.}\\
$(X_i,d,\mu_i)$ is a Ahlfors $n_i$-regular metric measure space with $0<n_1\leq n_2<\infty$.
\end{enumerate}
Let us observe that since $x_0\in \overline{X_1}\cap\overline{X_2}$ we have the inequality
 $d(x,x_0)\geq d(x,X_1)+
d(x,X_2)$, for every $x\in X$. Hence if $X$ satisfies $\mathcal{C}_{o}$ the constant $c$ is at least one. On the other hand, property $\mathcal{C}_{o}$ provides a pointwise equivalence of the functions $d(x,x_0)$ and $d(x,X_1) + d(x,X_2)$.

It is easy to see that property
$\mathcal{C}_0$ is equivalent to the existence of a constant
$\overline{c}>0$ such that for every $x\in X_i$ is true that $B(x, \overline{c}\ d(x,x_0))\cap X_j= \emptyset,$ $i\neq j$.

As is known, fractal sets produced by the Hutchinson iteration scheme (see \cite{Hu}), under the open set condition, are spaces of homogeneous type with the right Hausdorff measure which are Ahlfors  $Q$-regular for some positive real number $Q$. That is the case of middle thirds Cantor sets and Sierpinsky gaskets.

Our context is a natural abstraction of many situations of fractal fields (see for example \cite{HaKu}) with a special order of contact, we can consider for instance, a plate joined to a block, or a rod joined a plate, or a Cantor set joined a plate, or  a Cantor  set joined a Sierpinsky gasket, etc..

In this context, in  \cite{AN} are introduced ``weights'' to the Ahlfors $n_i$-regular measures, $i=1,2$ in order to get $\mu^{\gamma_1, \gamma_2}$, a doubling measure for the whole space $X=\bigcup_{i=1}^2 X_i\cup\{x_0\}$. More precisely they prove the following theorem.
\begin{theorem}(Theorem 1.2 in \cite{AN}) \label{tortuga}
Assume that $X=X_1\cup X_2\cup \{x_0\}$ satisfies $\mathcal{C}_0$.
For $i=1,2$
 let $(X_{i},d, \mu_i)$ be a $n_i$- normal space with
$0<n_1\leq n_2<\infty$. For $\gamma_{1}
> -n_1$ and $\gamma_{2}
> -n_2$, let  $\mu^{\gamma_{1},\gamma_{2}}$ be the measure defined  by
\begin{equation}
 \mu^{\gamma_{1},\gamma_{2}}(E)= \int_{E \cap X_1}d(x,x_0)^{\gamma_{1}} \, d \mu_1(x) + \int_{E \cap X_2}
d(x,x_0)^{\gamma_{2}}\,
 d \mu_2(x). \label{medidacontact}
\end{equation}
Then $(X,d, \mu^{\gamma_{1},\gamma_{2}})$ is a space of homogenous
type if and only if $\gamma_{1}+n_1=\gamma_{2}+n_2$.\par
\end{theorem}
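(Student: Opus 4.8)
The plan is to reduce the doubling property to a single uniform two-sided estimate of $\mu^{\gamma_1,\gamma_2}(B(x,r))$, and then to read off the equivalence $\gamma_1+n_1=\gamma_2+n_2$ from the behaviour of that estimate near the contact point $x_0$. The computational starting point is the elementary asymptotic
\[
\int_{B(x_0,R)\cap X_i} d(y,x_0)^{\gamma_i}\,d\mu_i(y)\approx R^{\gamma_i+n_i},
\]
which I would prove by decomposing $B(x_0,R)\cap X_i$ into dyadic annuli $\{y\in X_i:\ 2^{-k-1}R\le d(y,x_0)<2^{-k}R\}$, estimating the weight by $(2^{-k}R)^{\gamma_i}$ on each annulus and its $\mu_i$-measure by $(2^{-k}R)^{n_i}$ via Ahlfors $n_i$-regularity (extended to the center $x_0\in\overline{X_i}$ by approximating $x_0$ by points of $X_i$). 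The upper bound comes from the geometric series $\sum_k (2^{-k}R)^{\gamma_i+n_i}$, which converges precisely because $\gamma_i+n_i>0$, i.e.\ because $\gamma_i>-n_i$; the lower bound comes from the single outermost annulus $k=0$.

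With this in hand I would establish the key ball estimate, splitting according to the size of $r$ relative to $\rho:=d(x,x_0)$ and using the equivalent form of $\mathcal{C}_0$, namely $B(z,\overline c\,d(z,x_0))\cap X_j=\emptyset$ for $z\in X_i$, $i\neq j$:
\[
\mu^{\gamma_1,\gamma_2}(B(x,r))\approx
\begin{cases}
d(x,x_0)^{\gamma_i}\,r^{n_i}, & r\lesssim d(x,x_0),\ x\in X_i,\\
r^{\gamma_1+n_1}+r^{\gamma_2+n_2}, & r\gtrsim d(x,x_0).
\end{cases}
\]
In the first regime $B(x,r)$ meets only $X_i$ and the weight $d(\cdot,x_0)^{\gamma_i}$ is comparable to $\rho^{\gamma_i}$ throughout the ball. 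In the second regime, once $r\ge 2K_1\rho$ the quasi-triangle inequality gives $B(x_0,r/(2K_1))\subset B(x,r)\subset B(x_0,2K_1 r)$, and the integral asymptotic applied on both pieces produces the stated sum; the intermediate range $\overline c\rho<r<2K_1\rho$ consists of a \emph{fixed} number of dyadic scales, on which the measure is comparable to $\rho^{\gamma_i+n_i}$. The case $x=x_0$ is the second alternative.

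For sufficiency I set $Q:=\gamma_1+n_1=\gamma_2+n_2$, so the large-ball estimate becomes $\approx r^{Q}$, and I verify doubling case by case: within the small regime the quotient is $\approx 2^{n_i}$, within the large regime it is $\approx 2^{Q}$, and across the transition $r\approx\rho$ both estimates are comparable to $\rho^{Q}$, so a bounded number of doubling steps closes the gap with a uniform constant. For necessity I argue by contradiction. If, say, $\gamma_1+n_1<\gamma_2+n_2$, I take centers $x\in X_2$ with $\rho=d(x,x_0)\to 0$ and compare $B(x,\overline c\rho)$, whose measure is $\approx\rho^{\gamma_2+n_2}$, with the fixed dilate $B(x,2K_1\rho)\supset B(x_0,\rho)$, whose measure is $\gtrsim\rho^{\gamma_1+n_1}$; the doubling quotient is then $\gtrsim\rho^{(\gamma_1+n_1)-(\gamma_2+n_2)}\to\infty$, impossible for a bounded number of doublings. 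Symmetrically, $\gamma_2+n_2<\gamma_1+n_1$ is ruled out using centers in $X_1$, which forces equality.

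The main obstacle I anticipate is not the integral asymptotic—whose dependence on the sign of $\gamma_i+n_i$ is the conceptual heart but is routine—but rather the careful bookkeeping of constants in the transition regime. One must choose the threshold between the ``local'' and ``global'' regimes to be a sufficiently large multiple of $K_1$ so that the inclusion $B(x_0,r/(2K_1))\subset B(x,r)$ genuinely survives the quasi-metric constant, and one must confirm that the finitely many intermediate scales glue the local Ahlfors behaviour to the global power law $r^{Q}$ with a doubling constant independent of $x$ and $r$.
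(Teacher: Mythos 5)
The paper itself does not prove this theorem: it is imported from \cite{AN}, and the only internal trace of the argument is the quotation of Theorem 3.2 of \cite{AN} (Theorem \ref{3-2} here), which records precisely the two-regime ball estimates --- $\mu^{\gamma_1,\gamma_2}(B(x,r))\approx d(x,x_0)^{\gamma(x)}r^{n(x)}$ for $r<c\,d(x,x_0)$ and $\approx r^{\xi}$ for $c\,d(x,x_0)\le r\le S$ --- under the hypothesis $\gamma_1+n_1=\gamma_2+n_2$. Your plan is essentially that same route: establish the two-regime estimate, verify doubling regime by regime and across the transition $r\approx d(x,x_0)$ for sufficiency, and for necessity let the doubling quotient degenerate as the centre approaches $x_0$. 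Your necessity argument (comparing $B(x,\overline c\,\rho)$, of measure $\approx\rho^{\gamma_2+n_2}$, with $B(x,2K_1\rho)\supset B(x_0,\rho)$, of measure $\gtrsim\rho^{\gamma_1+n_1}$, for $x\in X_2$ and $\rho\to 0$) is correct, uses only a fixed number of doubling steps, and is the natural way to force equality of the exponents.

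One step, however, fails as literally stated. For the lower bound in your integral asymptotic $\int_{B(x_0,R)\cap X_i}d(y,x_0)^{\gamma_i}\,d\mu_i(y)\gtrsim R^{\gamma_i+n_i}$ you appeal to ``the single outermost annulus $k=0$''. But an Ahlfors regular set can have empty dyadic annuli around $x_0$: for the middle-thirds Cantor set (one of the motivating examples cited in Section \ref{doscomponentes} of this paper) with $x_0=0$, the set $\{y:\,1/3<d(y,0)<2/3\}$ is empty, so the annulus $\{y:\,R/2\le d(y,0)<R\}$ with $R=2/3$ carries zero measure and no fixed annulus can furnish the lower bound. The standard repair: lower Ahlfors regularity (transferred to the centre $x_0\in\overline{X_i}$ by approximation, as you do for the upper bound) gives $\mu_i(B(x_0,R)\cap X_i)\ge c\,R^{n_i}$, while upper regularity gives $\mu_i(B(x_0,\delta R)\cap X_i)\le C\delta^{n_i}R^{n_i}$; choosing $\delta$ so that $C\delta^{n_i}\le c/2$, the set $\{y\in X_i:\,\delta R\le d(y,x_0)<R\}$ has measure at least $(c/2)R^{n_i}$, and on it the weight $d(y,x_0)^{\gamma_i}$ is bounded below by $\min(\delta^{\gamma_i},1)\,R^{\gamma_i}$, which yields the claimed bound with constants depending only on the regularity constants and $\gamma_i$. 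With this correction (and the constant bookkeeping at the transition scales, which you already flag and which does work out), your proof is complete and matches the approach underlying the quoted estimates.
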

Throughout this paper we denote by  $\xi$ the number $\gamma_{i}+n_i$, $i=1,2$. It is enough to require
$\xi>0$  to ensure the pairs  $(\gamma_{1},\gamma_{2})$ are admissible for the definition of a doubling measure.
For our purposes, let $\gamma(x)$ denote the function defined in $X$ by
\begin{equation}\label{gammadex}
\gamma(x)= \left\{\begin{array}{ll}
                          \gamma_i, & \hbox{if $x\in X_i$, $i=1,2$;} \\
                          \gamma_1, & \hbox{if $x=x_0$,}
                        \end{array}
                      \right.
\end{equation}
also we could have chosen  $\gamma_2$ as image of $x_0$ and let $n(x)$ denote the function defined in $X$ by
\begin{equation}\label{ndex}
n(x)= \left\{\begin{array}{ll}
                          n_i, & \hbox{if $x\in X_i$, $i=1,2$;} \\
                          n_1, & \hbox{if $x=x_0$,}
                        \end{array}
                      \right.
\end{equation}
or $n_2$ as image of $x_0$.

Although the measure  $\mu^{\gamma_{1},\gamma_{2}}$  is not a Ahlfors $Q$-regular  measure for some $Q>0$, the following theorem give us  estimates of $\mu^{\gamma_{1},\gamma_{2}}$ on balls of $(X,d)$ similar to those of the inequalities of Ahlfors $Q$-regular measure.
\begin{theorem}(Theorem 3.2 in \cite{AN})\label{3-2}
Assume that $(X_1,X_2,d)$  satisfies  $\mathcal{C}_0$. For $i=1,
2$ let $\mu_i$ be  a  Borel measure on $(X_i,d)$ such that
$(X_i,d, \mu_i)$ is a $n_i$-normal space, with $ 0 < n_1 \leq n_2 <
\infty $. For $ \gamma_1 >-n_1$, and $ \gamma_2 > -n_2$, let $
\mu^{\gamma_1,\gamma_2}$ be  the measure define by
(\ref{medidacontact}). Assume that $\gamma_1 - \gamma_2 = n_2 -
n_1$ and  set $\xi = \gamma_1+ n_1 = \gamma_2+ n_2=\gamma(x)+n(x)$. Then, there exist purely geometric constants $ 1\leq K_3 < \infty $  and $1 > c> 0$, such that given $x\in X= X_1\cup X_2 $ and $r>0$ we have
\smallskip
\begin{enumerate}[(i)]
\item
$K_3^{-1}d(x,x_0)^{\gamma(x)}r^{n(x)}\leq
 \mu^{\gamma_1,\gamma_2}(B(x,r))\leq  K_3 d(x,x_0)^{\gamma(x)}r^{n(x)}$,
for $x \in X$ and $r<c\ d(x,x_0)$;

\smallskip
\item
$K_3^{-1}r^{\xi}\leq
 \mu^{\gamma_1,\gamma_2}(B(x,r))\leq
K_3r^{\xi}$, for $c\ d(x,x_0)\leq r \leq S:=\text{diam}\ (X_1)+ \text{diam}\ (X_2)$;

\smallskip
\item
$\mu^{\gamma_1,\gamma_2}(B(x,r))=\mu^{\gamma_,\gamma_2}(X_1)+\mu^{\gamma_1,\gamma_2}(X_2)$,
for $r> S$.
\end{enumerate}
\end{theorem}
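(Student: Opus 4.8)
The plan is to split the analysis into the three ranges of $r$ appearing in the statement, since each range forces $B(x,r)$ to see a different portion of $X$. Throughout I use that $\{x_0\}$ is $\mu^{\gamma_1,\gamma_2}$-null, that each $\mu_i$ is supported on $X_i$ with $A_1^{-1}t^{n_i}\le\mu_i(B(y,t))\le A_1t^{n_i}$ for $y\in X_i$ and $0<t\le\diam X_i$, and the dimension-matching $\gamma_i+n_i=\xi$. Part (iii) is immediate: as $x_0\in\overline{X}_1\cap\overline{X}_2$, every $y\in X_1\cup X_2$ obeys $d(x,y)\le d(x,x_0)+d(x_0,y)\le\diam X_1+\diam X_2=S$, so $r>S$ gives $B(x,r)\supseteq X$ and hence the value $\mu^{\gamma_1,\gamma_2}(X_1)+\mu^{\gamma_1,\gamma_2}(X_2)$.

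For part (i), say $x\in X_i$. Taking the geometric threshold $c\le\overline c$, where $\overline c$ is the constant from the equivalent formulation of $\mathcal{C}_0$ (namely $B(x,\overline c\,d(x,x_0))\cap X_j=\emptyset$ for $j\ne i$), the hypothesis $r<c\,d(x,x_0)$ forces $B(x,r)\cap X_j=\emptyset$ and $x_0\notin B(x,r)$, so only the $X_i$-integral in \eqref{medidacontact} survives. Every $y\in B(x,r)\cap X_i$ satisfies $(1-c)\,d(x,x_0)\le d(y,x_0)\le(1+c)\,d(x,x_0)$, whence $d(y,x_0)^{\gamma_i}\approx d(x,x_0)^{\gamma_i}$ with constants depending only on $c$ and $\gamma_i$; pulling this factor out and invoking $\mu_i(B(x,r))\approx r^{n_i}$ (legitimate since $r<c\,\diam X_i<\diam X_i$) produces exactly $\mu^{\gamma_1,\gamma_2}(B(x,r))\approx d(x,x_0)^{\gamma(x)}r^{n(x)}$.

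Part (ii) is the heart of the matter. First I would remove the dependence on the center: from $d(x,x_0)\le r/c$ one gets $B(x,r)\subseteq B(x_0,(1+1/c)r)$ and $B(x_0,r)\subseteq B(x,(1+1/c)r)$, so by monotonicity of the measure it suffices to prove $\mu^{\gamma_1,\gamma_2}(B(x_0,\rho))\approx\rho^\xi$ for $\rho\approx r$ and transport the bound through these inclusions, using $\rho^\xi\approx r^\xi$. For a ball centered at $x_0$ I would decompose each $B(x_0,\rho)\cap X_i$ into the dyadic annuli $A_k^i=\{y\in X_i:2^{-k-1}\rho<d(y,x_0)\le2^{-k}\rho\}$, on which $d(y,x_0)^{\gamma_i}\approx(2^{-k}\rho)^{\gamma_i}$. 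A short preliminary lemma supplies the boundary-centered estimate $\mu_i(B(x_0,t)\cap X_i)\approx t^{n_i}$ for $0<t\le\diam X_i$ --- lower bound by fitting a ball $B(y,t/2)\subseteq B(x_0,t)$ with $y\in X_i$ chosen near $x_0$, upper bound from $B(x_0,t)\subseteq B(y,2t)$ for any such $y$ --- whence $\mu_i(A_k^i)\lesssim(2^{-k}\rho)^{n_i}$. Summing and invoking $\gamma_i+n_i=\xi$ collapses each component to $\sum_{k\ge0}(2^{-k}\rho)^\xi=\rho^\xi\sum_{k\ge0}2^{-k\xi}$, a convergent geometric series precisely because $\xi>0$; this gives the upper bound $\mu^{\gamma_1,\gamma_2}(B(x_0,\rho))\lesssim\rho^\xi$.

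The delicate point --- and the one place that needs more than bookkeeping --- is the matching lower bound $\mu^{\gamma_1,\gamma_2}(B(x_0,\rho))\gtrsim\rho^\xi$, because a single dyadic annulus need not carry mass comparable to $\rho^{n_i}$ when $A_1$ is large. I would circumvent this with one thick annulus: choosing $\theta\in(0,1)$ small enough (depending only on $A_1$ and $n_i$) that $A_1^{-1}-A_1\theta^{n_i}>0$, the set $\{\theta\rho<d(y,x_0)\le\rho\}\cap X_i$ has $\mu_i$-mass $\ge(A_1^{-1}-A_1\theta^{n_i})\rho^{n_i}\gtrsim\rho^{n_i}$, while there $d(y,x_0)^{\gamma_i}\approx\rho^{\gamma_i}$, so that component alone contributes $\gtrsim\rho^{\gamma_i+n_i}=\rho^\xi$. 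One must also dispatch the saturation regime $\diam X_i\le\rho\le(1+1/c)S$, where $B(x_0,\rho)\cap X_i=X_i$ and the component stops growing; there the total mass is comparable to $S^\xi$ and $\rho$ is comparable to $S$ up to the fixed ratio $S/\min_i\diam X_i$, so $\rho^\xi$ stays comparable to the total. Assembling the estimates and absorbing the constants $A_1,\overline c,c,\theta,\gamma_i$ into a single $K_3$ (and fixing $c$) completes the argument.
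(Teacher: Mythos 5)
You should first be aware that the paper contains no proof of this statement: it is imported verbatim as Theorem 3.2 of \cite{AN}, so your argument can only be judged on its own terms, not against an in-paper proof. On those terms, most of it is sound: part (iii) is correct, part (i) is correct (the reduction to a single component via the equivalent form of $\mathcal{C}_0$, the comparability $d(y,x_0)\approx d(x,x_0)$ on $B(x,r)$, and the Ahlfors bound for $r<c\,d(x,x_0)\leq c\,\diam X_i$ are all legitimate), and your $x_0$-centered estimate $\mu^{\gamma_1,\gamma_2}(B(x_0,\rho))\approx\rho^{\xi}$ — dyadic annuli for the upper bound, the thick annulus with $A_1^{-1}-A_1\theta^{n_i}>0$ for the lower bound, convergence of $\sum_k 2^{-k\xi}$ from $\xi>0$ — is a correct and nicely self-contained argument. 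The upper bound in (ii) also transports correctly through $B(x,r)\subseteq B(x_0,(1+1/c)r)$.

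The genuine gap is the lower bound in part (ii). Your claim that the two inclusions let you ``remove the dependence on the center'' is false in the direction you need: $B(x_0,r)\subseteq B(x,(1+1/c)r)$ bounds from below the measure of the \emph{enlarged} ball $B(x,(1+1/c)r)$, not of $B(x,r)$, and rescaling does not repair this, since replacing $r$ by $\rho=cr/(1+c)$ would require the stronger hypothesis $d(x,x_0)\leq\rho/c=r/(1+c)$ rather than the given $d(x,x_0)\leq r/c$. More starkly: since $c<1$, the regime of (ii) allows $r\leq d(x,x_0)\leq r/c$, in which case $x_0\notin B(x,r)$ and therefore \emph{no} ball centered at $x_0$, of any radius, is contained in $B(x,r)$; the transport is impossible there. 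This is exactly the crossover regime where the theorem has content, i.e.\ where $d(x,x_0)^{\gamma(x)}r^{n(x)}$ and $r^{\xi}$ must match. The fix is available from your own part (i): if $d(x,x_0)\geq r/2$, take $s=(c/2)\,d(x,x_0)\leq r$, so that $B(x,s)\subseteq B(x,r)$ lies in the single component $X_i$, the weight on it is comparable to $d(x,x_0)^{\gamma_i}$, and hence $\mu^{\gamma_1,\gamma_2}(B(x,r))\geq\mu^{\gamma_1,\gamma_2}(B(x,s))\gtrsim d(x,x_0)^{\gamma_i}s^{n_i}\approx d(x,x_0)^{\xi}\gtrsim r^{\xi}$, using $\xi>0$ and $d(x,x_0)\approx r$ in this regime; while if $d(x,x_0)<r/2$ your transport does work via $B(x_0,r/2)\subseteq B(x,r)$. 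With that case split inserted, the proof closes.
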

We note that it is possible to consider the contact point  $x_0$ as one element of $X$ in the statement of the above theorem; in such case it is easy to obtain, using a density argument, that  $\mu^{\gamma_1,\gamma_2}(B(x_0,r))\sim r^{\xi}$ for all $r>0$.

The estimates in above theorem can be summarized in following way
\begin{equation}\label{ej1updoub}
 K_3^{-1}r^{\xi}\leq\mu^{\gamma_1,\gamma_2}(B(x,r))\leq\left\{
\begin{array}{ll}
     K_3 r^{n(x)}d(x,x_0)^{\gamma(x)} & \hbox{if $r<c\ d(x,x_0)$,}\\
     K_3r^{\xi} & \hbox{if $ r\geq c\ d(x,x_0)$}
\end{array}
\right.
\end{equation}
where $\gamma(x)$ and $n(x)$ denote the functions defined  by \eqref{gammadex} and \eqref{ndex} respectively. Thus \eqref{ej1updoub} provides a new example of measure upper doubling, with upper dominating function defined by
\begin{equation}\label{ej1lambda}
\lambda(x,r)=\left\{
\begin{array}{ll}
   K_3 r^{n(x)}d(x,x_0)^{\gamma(x)} & \hbox{if $r<c\ d(x,x_0)$,}\\
     K_3r^{\xi} & \hbox{if $r\geq c\ d(x,x_0)$}
\end{array}
\right.
\end{equation}
It is easy verify that the function $\lambda$ satisfies the properties required. On the other hand, using  the fact that $(X,d)$  is bounded, we have that $d(x,x_0)$ is less than or equal $R_0={\rm diam\ } X$, we can  obtain another simpler expression for $\lambda$. In fact,
\begin{equation}\label{ej2updoub}
 \mu^{\gamma_1,\gamma_2}(B(x,r))\leq \lambda(x,r)= K_4 r^{n(x)},
 \end{equation}
where $K_4$ is a constant depending of $K_3, R_0, \gamma_1$ and $\gamma_2$. The inequalities \eqref{ej1updoub} and \eqref{ej2updoub} show that $\mu^{\gamma_1,\gamma_2}$ is a measure  lower Ahlfors $\xi$-regular and variable  upper Ahlfors regular, with $\xi\geq n(x)$ for all $x\in X$. As this measure is lower Ahlfors regular there is not exist isolated points and as this measure is doubling is not atomic.

In the setting described by {\bf [a]}, {\bf [b]} and {\bf
[c]}  above, for a function $f$ defined
on $X$ satisfying , $f=f_{1}\chi_{X_1}+ f_2\chi_{X_2}$, belonging to
$L^{1}_{\textrm{loc}}(X,d,\mu^{\gamma_{1},\gamma_{2}})$ we have that its
Hardy- Littlewood  maximal function given by
\begin{align*}\label{maximal}
\mathcal{M}f(x)&=
\sup_{x\in B}\frac{1}{\mu^{\gamma_{1},\gamma_{2}}(B)}\int_{B}|f(y)| \, d
\mu^{\gamma_{1},\gamma_{2}}(y)
\\
&=\sup_{x\in B}\frac{\int_{B\cap X_1}\negthickspace|f_{1}(y)|d(y,x_0)^{\gamma_{1}} \, d
\mu_1(y)
 + \int_{B \cap X_2}\negthickspace|f_{2}(y)|
 d(y,x_0)^{\gamma_{2}}\,
 d\mu_2(y)}{\int_{B\cap
X_1}\negthickspace d(y,x_0)^{\gamma_{1}} \, d \mu_1(y) + \int_{B
\cap X_2} \negthickspace d(y,x_0)^{\gamma_{2}}\, d \mu_2(y)}.
\end{align*}

The maximal operator is bounded on $L^p(X,d,\mu^{\gamma_{1},\gamma_{2}})$ since $\mu^{\gamma_{1},\gamma_{2}}$ is doubling. Moreover $\mathcal{M}$ is bounded on $L^p(wd\mu)$ if and only if $w\in A_p(X,d,\mu^{\gamma_{1},\gamma_{2}})$. In \cite{AIN2} are given necessary and sufficient conditions on two Muckenhoupt $A_p$ weights defined on each of the two components of a space homogeneous type touching at a single point, in order to obtain an $A_p$ weight on the whole space.

\section{Riesz Type Potential in environment doubling with two components of different dimensions}\label{Rieszdoscomponentes}
We can  obtain the Riesz Type Potential operator associated to measure $\mu^{\gamma_{1},\gamma_{2}}$ considering that the upper dominating function in this case is $K_4 r^{n(x)}$. In fact, from \eqref{defIalfamulambda} we get
\begin{equation}
I_{\alpha}^{n(\cdot)} f(x)= \int_{X} \frac{ d(x,y)^{\alpha}}{d(x,y)^{n(x)}} \, f(y) \, d\mu^{\gamma_{1},\gamma_{2}}(y)
\end{equation}
The results about boundedness  of this operator for functions in $L^p(X,d,\mu^{\gamma_{1},\gamma_{2}})$ are a immediate consequence of Theorem \ref{sufIlambda} and Theorem \ref{necIlambda} and lead to the following assertions.
\begin{theorem}\label{sufIndex}
We assume that $X=X_1\cup X_2\cup \{x_0\}$  satisfy $\mathcal{C}_0$ y $(X,d)$ is a quasi-metric bounded. Let  $\mu_i$, $i=1, 2$  be a Borel measure defined on $(X_i,d)$ such that $(X_i,d, \mu_i)$ is a $n_i$-Ahlfors regular space, with $ 0 < n_1 \leq n_2 <\infty $. For $\gamma_1 >-n_1$, and $ \gamma_2 > -n_2$, let $\mu^{\gamma_1,\gamma_2}$ be the measure that satisfies \eqref{ej2updoub} with $\gamma_{1}-\gamma_{2}=n_2-n_1$ and $\gamma_{i}>0$. Let $0<\alpha<n_1$ and
$1<p<q_-\leq q(x)\leq q_+<\infty$ for all $x\in X$, if $\frac{1}{q(x)}=\frac{1}{p}-\frac{\alpha}{n(x)}$ then the operator  $I_{\alpha}^{n(\cdot)}$ is a bounded operator from $L^{p}(X,d, \mu^{\gamma_1,\gamma_2})$ to $L^{q(\cdot)}(X,d, \mu^{\gamma_1,\gamma_2})$.
\end{theorem}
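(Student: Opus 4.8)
The plan is to verify that Theorem \ref{sufIndex} is a direct specialization of the general boundedness result in Theorem \ref{sufIlambda}, applied to the upper doubling measure $\mu^{\gamma_1,\gamma_2}$ with the simplified dominating function $\lambda(x,r)=K_4 r^{n(x)}$ furnished by \eqref{ej2updoub}. First I would confirm that all the structural hypotheses of Theorem \ref{sufIlambda} are satisfied in this setting. The space $(X,d)$ is bounded by assumption and, being built from two Ahlfors $n_i$-regular components joined at a single contact point, it is geometrically doubling (each component is geometrically doubling, and their union with one extra point remains so). The $d$-balls are open, and by Theorem \ref{3-2} together with \eqref{ej2updoub} the measure $\mu^{\gamma_1,\gamma_2}$ is finite on bounded sets and positive on balls; moreover, as noted after \eqref{ej2updoub}, since the measure is lower Ahlfors $\xi$-regular it has no isolated points, and since it is doubling it is non-atomic, so $\mu^{\gamma_1,\gamma_2}(\{x\})=0$ for all $x$.

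Next I would check the conditions on the dominating function. With $\lambda(x,r)=K_4 r^{n(x)}$, as a function of $r$ this is a constant multiple of a power $r^{n(x)}$, hence non-decreasing and doubling (with $C_\lambda=2^{n(x)}\le 2^{n_2}$). Its lower type as a function of $r$ equals $n(x)$, which satisfies $n(x)\ge n_1>\alpha$ by the hypothesis $0<\alpha<n_1$; thus $\lambda$ is of lower type greater than $\alpha$, uniformly in $x$. It remains to translate the pointwise growth condition $r^\alpha\le\lambda(x,r)^{\frac1p-\frac1{q(x)}}$ required by Theorem \ref{sufIlambda}. Here I would use the relation $\frac{1}{q(x)}=\frac1p-\frac{\alpha}{n(x)}$, so that $\frac1p-\frac1{q(x)}=\frac{\alpha}{n(x)}$, and compute
\begin{equation*}
\lambda(x,r)^{\frac1p-\frac1{q(x)}}=\bigl(K_4 r^{n(x)}\bigr)^{\frac{\alpha}{n(x)}}=K_4^{\frac{\alpha}{n(x)}}\,r^{\alpha}.
\end{equation*}
Since $K_4\ge 1$ (or after absorbing a harmless constant, using that the proof of Theorem \ref{sufIlambda} only needs the inequality up to a multiplicative constant), we obtain $r^\alpha\le C\,\lambda(x,r)^{\frac1p-\frac1{q(x)}}$, which is exactly the hypothesis needed.

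The only genuine subtlety — and the step I expect to require the most care — is reconciling the slightly different normalizations of the kernel and of the dominating-function inequality. The operator $I_\alpha^{n(\cdot)}$ has kernel $d(x,y)^{\alpha-n(x)}$, whereas the abstract operator $I_\alpha^\lambda$ in \eqref{defIalfamulambda} carries the factor $1/\lambda(x,d(x,y))=1/(K_4\,d(x,y)^{n(x)})$; these differ only by the constant $K_4$, so $I_\alpha^{n(\cdot)}=K_4\,I_\alpha^{\lambda}$ and boundedness of one is equivalent to boundedness of the other. Similarly, the growth condition in Theorem \ref{sufIlambda} is stated without a constant, so I would either verify that $K_4^{\alpha/n(x)}$ is bounded above (it is, since $1/n_2\le 1/n(x)\le 1/n_1$ and $\alpha$ is fixed, giving a uniform bound) or note that the proof of Theorem \ref{sufIlambda} goes through verbatim with the inequality replaced by its constant-multiple version. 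Once these bookkeeping points are dispatched, the conclusion follows: $I_\alpha^{n(\cdot)}$ is bounded from $L^p(X,d,\mu^{\gamma_1,\gamma_2})$ to $L^{q(\cdot)}(X,d,\mu^{\gamma_1,\gamma_2})$, completing the proof.
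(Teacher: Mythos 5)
Your proposal is correct and follows essentially the same route as the paper, which obtains Theorem \ref{sufIndex} as an immediate consequence of Theorem \ref{sufIlambda} applied with $\lambda(x,r)=K_4 r^{n(x)}$ from \eqref{ej2updoub}, the relation $\frac{1}{q(x)}=\frac{1}{p}-\frac{\alpha}{n(x)}$ giving the growth hypothesis and $0<\alpha<n_1$ the lower-type hypothesis, with the constant $K_4$ absorbed exactly as you describe. The only (harmless) difference is the geometric-doubling check: the paper notes that $(X,d)$ supports the doubling measure $\mu^{\gamma_1,\gamma_2}$ (Theorem \ref{tortuga}) and is therefore geometrically doubling, whereas you argue directly from the two Ahlfors regular components.
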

\begin{theorem}\label{necIndex}
For a measure $\mu$, finite over balls and not having any atoms, if $\frac{1}{q(x)}=\frac{1}{p}-\frac{\alpha}{n(x)}$ with $0<\alpha<n_1$ and
$1<p<q_-\leq q(x)\leq q_+<\infty$ for all $x\in X$, then the condition $\mu(B(x,r))\leq \tilde{C} r^{n(x)}$  for some constant $\tilde{C}$ is necessary for $\|I_{\alpha}^{n(\cdot)} f\|_{q(\cdot)}\leq C \|f\|_p$ to hold.
\end{theorem}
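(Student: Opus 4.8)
The plan is to read Theorem \ref{necIndex} off from Theorem \ref{necIlambda} by taking the dominating function to be $\lambda(x,r)=r^{n(x)}$, where $n(\cdot)$ is the function in \eqref{ndex}. With this choice the kernel of the operator is $d(x,y)^{\alpha}/d(x,y)^{n(x)}=d(x,y)^{\alpha}/\lambda(x,d(x,y))$, so that $I_{\alpha}^{n(\cdot)}=I_{\alpha}^{\lambda}$. The first step is to translate the exponent hypothesis: the relation $\frac{1}{q(x)}=\frac{1}{p}-\frac{\alpha}{n(x)}$ says exactly that $\frac{1}{p}-\frac{1}{q(x)}=\frac{\alpha}{n(x)}$, whence
\begin{equation*}
\lambda(x,r)^{\frac{1}{p}-\frac{1}{q(x)}}=\bigl(r^{n(x)}\bigr)^{\alpha/n(x)}=r^{\alpha},
\end{equation*}
which is the identity $r^{\alpha}=\lambda(x,r)^{\frac{1}{p}-\frac{1}{q(x)}}$ required in Theorem \ref{necIlambda}. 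The arithmetic conditions $\alpha>0$ and $1<p<q_{-}\leq q(x)\leq q_{+}<\infty$, together with the assumption that $\mu$ be finite over balls and atomless, are part of the hypotheses.

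Next I would check the structural properties of $\lambda$ as a function of its second variable. For fixed $x$ the map $r\mapsto r^{n(x)}$ is non-decreasing, and it is doubling with $\lambda(x,2r)=2^{n(x)}\lambda(x,r)\leq 2^{n_2}\lambda(x,r)$ because $n(x)\leq n_2$. For the lower type, write $\lambda(x,sr)=s^{n(x)}\lambda(x,r)$; since $0<\alpha<n_1\leq n(x)$ and $0<s\leq 1$ we have $s^{n(x)}\leq s^{n_1}$, so $\lambda(x,sr)\leq s^{n_1}\lambda(x,r)$. Thus $\lambda$ is of lower type $n_1$ uniformly in $x$, and $n_1>\alpha$ makes it of lower type greater than $\alpha$, exactly as demanded (this is the content of Proposition \ref{proptipoinferior}). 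Feeding these verifications into Theorem \ref{necIlambda} produces a constant $C'$ with $\mu(B(x,r))\leq C'\lambda(x,r)=C'r^{n(x)}$, which is the asserted necessary condition with $\tilde C=C'$.

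The point I expect to be the genuine obstacle is the within-ball comparability of $\lambda$ recorded in \eqref{laotralambda}, used in the proof of Theorem \ref{necIlambda} to normalize the weighted test function: for the two-valued exponent one does \emph{not} have $r^{n(x)}\leq C\,r^{n(a)}$ for all radii when $x$ and $a$ lie in different components and $r$ is small, so the deduction is not literally automatic near the contact point $x_{0}$. Here the contact geometry must intervene. By property $\mathcal{C}_{0}$, points of different dimension lie in distinct components and are separated by a distance comparable to their distances to $x_{0}$; hence a ball on which $n(\cdot)$ is not constant must have radius $r\gtrsim d(a,x_{0})$. For the complementary radii $r\lesssim d(a,x_{0})$ the ball $B(a,r)$ lies in a single component, $n(\cdot)\equiv n(a)$ there, and the plain test function $f=\chi_{B(a,r)}$ gives the pointwise bound $I_{\alpha}^{n(\cdot)}f(x)\geq (2K_{1}r)^{\alpha-n(a)}\mu(B(a,r))$ on $B(a,r)$; combining this with the boundedness inequality, Lemma \ref{acotacionporabajo}, and the dichotomy $\mu(B(a,r))\gtrless 1$ exactly as in Theorem \ref{necIlambda} yields $\mu(B(a,r))\leq C'r^{n(a)}$. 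The delicate case is that of balls spanning both components, where I would exploit the fact that the kernel carries the exponent of the \emph{evaluation} point: testing against a characteristic function and restricting the output norm to a sub-ball of $B(a,r)$ inside the component of $a$ lets that point register the full mass $\mu(B(a,r))$ through the exponent $n(a)$, and it is this mechanism, controlled via $\mathcal{C}_{0}$, that forces the bound. Handling this separation of scales cleanly is the only step requiring real work; once it is in place the conclusion is immediate from Theorem \ref{necIlambda}.
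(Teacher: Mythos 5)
Your reduction and your verification of the routine hypotheses (the exponent identity $r^{\alpha}=\lambda(x,r)^{\frac{1}{p}-\frac{1}{q(x)}}$, monotonicity, doubling, lower type $n_1>\alpha$) coincide with what the paper does: its entire proof of Theorem \ref{necIndex} is the assertion that it is an immediate consequence of Theorem \ref{necIlambda} with $\lambda(x,r)=r^{n(x)}$. Your further observation is correct and identifies exactly what that assertion overlooks: when $n_1<n_2$ this $\lambda$ violates the standing assumption \eqref{laotralambda}, since for $x\in X_1$, $a\in X_2$ with $d(x,a)<r$ (such pairs exist for arbitrarily small $r$, with both points near $x_0$) one has $\lambda(x,r)/\lambda(a,r)=r^{n_1-n_2}\to\infty$ as $r\to 0$; and \eqref{laotralambda} is invoked in the proof of Theorem \ref{necIlambda} (explicitly, to get $\|f\|_p\le\mu(B(a,r))^{1/p}$ for the weighted test function, and again implicitly in the kernel lower bound). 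So the citation is indeed not automatic, and your single-component case $r\lesssim \bar c\, d(a,x_0)$, where $n(\cdot)$ and $q(\cdot)$ are constant on the ball, is handled correctly by the constant-exponent argument.

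The gap is the case you defer --- balls centered at $a\in X_2$ with $r\gtrsim d(a,x_0)$ --- and it cannot be closed. Your mechanism (test with $\chi_{B(a,r)}$ and restrict the output norm to the part $S$ of $B(a,r)$ lying in the component of $a$) produces $(2K_1r)^{\alpha-n(a)}\mu(B(a,r))\,\mu(S)^{1/q(a)}\le C\mu(B(a,r))^{1/p}$, which yields $\mu(B(a,r))\le C' r^{n(a)}$ only when $\mu(S)\gtrsim \mu(B(a,r))$; for a measure whose mass near $x_0$ sits in the other component $X_1$ this degenerates, and in the extreme $\mu(S)=0$ it says nothing. Moreover, no alternative argument can succeed, because the statement itself fails in this regime: take $d\mu = d\mu_1 + d(\cdot,x_0)^{M}d\mu_2$ with $M$ large. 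Splitting $f$ and the output over $X_1$ and $X_2$, the two diagonal pieces of $I_{\alpha}^{n(\cdot)}$ are bounded by the constant-exponent Hardy--Littlewood--Sobolev theorem ($\mu_1$ is Ahlfors $n_1$-regular, and $d(\cdot,x_0)^M d\mu_2$ is upper Ahlfors $n_2$-regular, so Theorem \ref{sufIlambda} with constant exponent applies), while the two off-diagonal pieces are bounded because $\mathcal{C}_0$ gives $d(x,y)\ge \bar c\, d(x,x_0)$ and $d(x,y)\ge \bar c\, d(y,x_0)$ for points in different components, so the factor $d(\cdot,x_0)^{M}$ makes their kernels integrable. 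Hence $\|I_{\alpha}^{n(\cdot)}f\|_{q(\cdot)}\le C\|f\|_p$ holds, and $\mu$ is finite on balls and atomless; yet for $a\in X_2$ with $\delta=d(a,x_0)$ small and $r=2K_1\delta$, the ball $B(a,r)$ contains $B(x_0,\delta)$, so $\mu(B(a,r))\ge \mu_1(B(x_0,\delta)\cap X_1)\gtrsim \delta^{n_1}$, which is incompatible with $\mu(B(a,r))\le \tilde C r^{n_2}$ as $\delta\to 0$. Consequently the necessity theorem can only hold in the restricted form your first case actually proves (for $r\le \bar c\, d(x,x_0)$), or under an additional hypothesis forcing $\mu$ to give comparable mass to the two components near $x_0$ (as $\mu^{\gamma_1,\gamma_2}$ does, being doubling); as stated, both your proposed completion and the paper's ``immediate consequence'' break down at precisely this point.
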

\begin{remark}
Note that in the hypotheses of Theorem \ref{sufIndex} is not assumed that $(X,d)$ is geometrically doubling as in Theorem \ref{sufIlambda}, because as mentioned in Section \ref{setting} if $(X,d)$ supports a doubling measure  then $(X,d)$ is geometrically doubling. On the other hand, the condition  about lower type of $\lambda$  when $\lambda(x,r)=r^{n(x)}$ leads to $\alpha<n_1$.
\end{remark}

\bibliographystyle{plain}
\bibliography{ref}

\end{document}